\DeclareMathOperator{\End}{End}
\DeclareMathOperator{\Hom}{Hom}
\DeclareMathOperator{\SU}{SU}
\DeclareMathOperator{\Span}{Span}
\DeclareMathOperator{\Id}{Id}
\DeclareMathOperator{\dbar}{\bar \partial}
\renewcommand{\Im}{\operatorname{Im}}
\renewcommand{\Re}{\operatorname{Re}}
\newcommand{\R}{\mathbb{R}}
\newcommand{\C}{\mathbb{C}}
\newcommand{\Z}{\mathbb{Z}}
\renewcommand{\H}{\mathbb{H}}  
\newcommand{\HP}{\mathbb{HP}}
\newcommand{\ii}{i}        
\newcommand{\jj}{j}
\newcommand{\kk}{k}
\numberwithin{equation}{section}  
\theoremstyle{plain}
\newtheorem{The}[subsection]{Theorem}
\newtheorem{Pro}[subsection]{Proposition}
\newtheorem{Lem}[subsection]{Lemma}
\newtheorem{Cor}[subsection]{Corollary}
\newtheorem*{The*}{Theorem}
\theoremstyle{definition}
\theoremstyle{remark}
\begin{document}
\title{Euclidean minimal tori with planar ends and elliptic solitons}

\author{Christoph Bohle}
\author{Iskander A.\ Taimanov}

\address{Christoph Bohle\\
  Universit\"at T\"ubingen\\
  Mathematisches Institut \\
  Auf der Morgenstelle 10\\
  72076 T\"ubingen \\
  Germany}

\address{Iskander A.\ Taimanov\\
Sobolev Institute of Mathematics\\ 630090
    Novosibirsk\\ Russia}

\email{bohle@mathematik.uni-tuebingen.de \\ taimanov@math.nsc.ru}

\subjclass[2000]{Primary: 53C42 Secondary: 53A10, 53A30, 37K25}

\date{\today}

\begin{abstract}
  A Euclidean minimal torus with planar ends gives rise to an immersed
  Willmore torus in the conformal 3--sphere $S^3=\R^3\cup \{\infty\}$.
  The class of Willmore tori obtained this way is given a spectral
  theoretic characterization as the class of Willmore tori with
  reducible spectral curve.  A spectral curve of this type is
  necessarily the double of the spectral curve of an elliptic KP
  soliton. The simplest possible examples of minimal tori with planar
  ends are related to 1--gap Lam\'e potentials, the simplest
  non--trivial algebro geometric KdV potentials.  If one allows for
  translational periods, Riemann's ``staircase'' minimal surfaces
  appear as other examples related to 1--gap Lam\'e potentials.
\end{abstract}

\thanks{The first author (C.B.) was supported by DFG Sfb/Tr 71
  ``Geometric Partial Differential Equations'', the second author
  (I.A.T.) was supported by grant RFBR 12-01-00124-a of the program
  ``Fundamental Problems of Nonlinear Dynamics in Mathematical and
  Physical Sciences'' of the Presidium of RAS. In addition both
  authors were supported by the Hausdorff Institute of Mathematics in
  Bonn.  }

\maketitle

\section{Introduction}

Complete minimal surfaces with finite total curvature and planar ends in
Euclidean 3--space can be compactified by filling in points at the ends if one
views them as immersions into the conformal 3--sphere $S^3=\R^3\cup
\{\infty\}$. This is equivalent to compactifying their preimage under
stereographic projection.  The main interest in this M\"obius geometric
compactification stems from the fact that the resulting immersions of compact
surfaces are critical points of the Willmore energy.  In fact, as proven by
Bryant \cite{Br84,Br88}, all Willmore spheres in the conformal 3--sphere can
be obtained from this construction. This is not anymore true for Willmore
immersions of genus $g\geq 1$. For example, the Clifford torus has Willmore
energy $W=2\pi^2$, while compactifications of Euclidean minimal surfaces with
planar ends always have Willmore energy $W=4\pi n$ for $n$ the number of
ends.

In the present paper we characterize Willmore tori in conformal
3--space~$S^3$ that are Euclidean minimal with planar ends for some
point $\infty\in S^3$ at infinity in terms of spectral and integrable
systems theory.  Our integrable systems approach leads to a simple
description of the Euclidean minimal tori previously studied by Costa
\cite{Co} and Kusner, Schmitt \cite{KS} which have $W=16\pi$ and four
ends, the least number of ends possible for Euclidean minimal tori
with planar ends.  It equally applies to minimal tori with planar ends
and translational periods like Riemann's ``staircase'' minimal
surfaces, see e.g.\ \cite{HKR,LRW,MPR,MP}. From the spectral theory
point of view all these examples turn out to be related to the
simplest non--trivial algebro geometric KdV potentials, the 1--gap
Lam\'e potentials.  This observation also sheds new light on the recent
characterization of Riemann's minimal surfaces obtained by Meeks,
Perez and Ros \cite{MPR,MP} which uses algebro geometric solutions to
the KdV equation in an essential way.

The way that spectral and integrable systems theory intervenes in our
setting is through the \emph{spectral curve} of conformally immersed
tori.  The spectral curve is an invariant of tori immersed into
3--dimensional space first considered in \cite{Ta98}. It is defined as
the Riemann surface normalizing the Floquet spectrum of a
2--dimensional Dirac operator
\begin{equation}
  \label{eq:dirac-op}
  D=
\begin{pmatrix}
  0 & \partial \\  -\bar\partial  & 0
\end{pmatrix} + \begin{pmatrix}
  U  & 0 \\ 0 & \bar U
\end{pmatrix}.
\end{equation}
attached to an immersion. The spectral curve is asymptotic to the
``vacuum'' spectrum belonging to the operator with zero potential
$U=0$ which is the disjoint union of two copies of $\mathbb{C}$ with a
$\mathbb{Z}^2$--lattice of double points. For a generic immersion the
spectral curve has infinite genus, because infinitely many of the
vacuum double points turn into handles.  In the special case that only
finitely many handles appear in the asymptotics, the spectral curve
has finite genus and the immersion can be constructed using finite gap
integration.  This happens for example, if the immersion is the
solution to elliptic variational problems related to the area
\cite{PS89,Hi} or Willmore functional~\cite{B}.

The fact that all previously known examples of spectral curves occurring in
surface theory were irreducible made people expect that irreducibility should
hold for general conformally immersed tori (see e.g.\ the ``Pretheorem''
in~\cite{Ta04}).  In the present paper we show that this is not true and
prove (see Theorem~\ref{the:main} below):

\begin{The*}
  Every Euclidean minimal torus $f\colon T^2\rightarrow \R^3\cup \{\infty\}$
  with planar ends has a reducible spectral curve. 
\end{The*}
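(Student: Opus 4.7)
The plan is to exploit the M\"obius invariance of the spectral curve and compute it in the Euclidean chart, where the minimality of $f$ forces the Dirac potential $U$ in~\eqref{eq:dirac-op} to vanish. The key observation is that when $U=0$ the operator~\eqref{eq:dirac-op} decouples into a holomorphic and an antiholomorphic part, so the Floquet spectrum splits automatically into two disjoint components; this is precisely the ``vacuum spectrum'' already identified in the introduction as a disjoint union of two copies of $\C$, which is manifestly reducible.

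First, I would choose the stereographic chart on $S^3 = \R^3 \cup \{\infty\}$ so that $\infty$ lies at the ends of $f$. In this chart $f$ is a smooth minimal immersion on $T^2 \setminus \{p_1,\dots,p_n\}$, its mean curvature vanishes, and hence the Dirac potential $U$ (which is proportional to $H$ times a conformal factor) is zero off the $p_i$. Using the M\"obius invariance of the spectral curve, a standard feature of the Weierstrass representation approach of \cite{Ta98}, I am free to compute the Floquet spectrum with respect to this ``Euclidean'' Dirac operator.

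Second, with $U\equiv 0$ away from the ends, the equation $D\psi = 0$ reduces to $\bar\partial\psi_1 = 0$ and $\partial\psi_2 = 0$. The Floquet problem therefore splits into two independent problems: one for a holomorphic $\psi_1$ and one for an antiholomorphic $\psi_2$, each required to be a quasiperiodic meromorphic section on $T^2$ with the pole structure at the $p_i$ dictated by the planarity of the ends via the Weierstrass representation. Each problem contributes one component to the Floquet spectrum; the two components are related by complex conjugation, and their disjointness is exactly the reducibility statement. Identifying each component with the spectral curve of an algebro-geometric KdV/KP potential via the Krichever correspondence then recovers the ``double of an elliptic KP soliton'' structure announced in the abstract, and in the simplest case of four planar ends one gets the $1$-gap Lam\'e spectrum.

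The step I expect to be the main obstacle is the passage between the two charts. The Floquet spectrum is a priori defined for a smooth Dirac operator on the compact torus, whereas my ``Euclidean'' operator achieves $U=0$ only at the cost of eigenfunctions with prescribed singularities at the ends. One must therefore show that the M\"obius-gauge change relating the Euclidean chart to a conformal chart in which $f$ is smooth on all of $T^2$ intertwines the two spectral problems, so that the Floquet spectrum really can be read off from the split free operator on the punctured torus. Verifying that planarity of the ends is precisely the condition that makes this matching work is what ties the result to the Krichever/Lam\'e spectral theory used elsewhere in the paper.
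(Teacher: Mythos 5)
Your overall strategy is the same as the paper's (the reducibility does come from the decoupling of the free Dirac operator into $\dbar$-- and $\partial$--parts with boundary conditions at the ends), but the step you defer as ``the main obstacle'' is not a technicality to be checked later --- it is the entire content of the proof, and your proposed shortcut for it fails. You cannot invoke M\"obius invariance of the spectral curve here: Theorem~\ref{the:moebius_inv_of_spec} is proved under the assumption that the immersion does \emph{not} pass through the chosen point $\infty$, whereas for a minimal torus with planar ends one must take $\infty$ exactly at the ends, where the Euclidean gauge degenerates (the section $\psi_1$ vanishes, the flat connection $\nabla$ with $\nabla\psi_1=0$ and the potential of the Euclidean Dirac operator are undefined there). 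So the identification of the Floquet spectrum of the smooth, M\"obius--invariant problem on the compact torus with that of the free operator on the punctured torus has to be proven directly. Concretely, one must show that holomorphic sections $\psi^h$ of $V/L$ with monodromy $h$ correspond \emph{bijectively} to pairs of meromorphic functions $\Phi_1,\Phi_2$ solving the decoupled equations with at most first--order poles and vanishing order--zero Laurent coefficients at the ends: the forward direction requires a residue computation at each end (using that $s_1,s_2$ have no constant terms and that $|s_1(-1)|^2+|s_2(-1)|^2\neq 0$), and the converse requires integrating the resulting $1$--form to a function with the correct multiplicative monodromy (Lemma~\ref{lem:integration_preserving_monodromy}) and then a regularity argument showing that $\psi_1\chi$ extends $C^1$, hence smoothly, through the ends. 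None of this is supplied or replaced by an alternative argument in your proposal, and ``planarity makes the matching work'' is asserted rather than shown.

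Two further points. First, your claim that the resulting spectrum ``is precisely the vacuum spectrum \dots two copies of $\C$'' is wrong: the boundary conditions at the ends cut the free spectrum down to a finite--genus component (an elliptic KP, in the simplest cases Lam\'e, curve as in Corollary~\ref{cor:main} and Theorem~\ref{the:four-ends}), not the full vacuum Floquet variety with its lattice of double points. Second, even granting the correspondence, reducibility does not follow merely from ``two independent problems'': one still needs the general fact that for generic multipliers the space of holomorphic sections of $V/L$ with that monodromy is complex $1$--dimensional, so that generically one of $\Phi_1,\Phi_2$ must vanish identically; this is what forces the multiplier set to split into two components, interchanged by the anti--holomorphic involution $\psi^h\mapsto\psi^h\jj$. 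As it stands, your proposal reproduces the paper's introductory sketch but leaves the decisive lemma unproved and rests on an invariance statement that does not apply in the singular gauge.
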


By general spectral curve theory (cf.\
Appendix~\ref{sec:spectral_curve}), reducibility of the spectral curve
implies in particular that the spectral curve has two irreducible
components of finite genus which are interchanged under an
anti--holomorphic involution. Moreover, each component is a compact
Riemann surface with one puncture. As a consequence, each component of
the spectral curve is the spectral curve of an elliptic KP soliton in
the sense of \cite{Kr} (cf.\ Appendix~\ref{app:elliptic_KP}).  This
makes contact to the theory of elliptic Calogero--Moser systems
\cite{AMM,Kr}.

Combining our result with the fact that a Willmore torus in the
conformal 3--sphere $S^3$ that is not Euclidean minimal with planar
ends has an irreducible spectral curve of finite genus (see
Theorem~5.1 and Corollary~5.3 of \cite{B}), we obtain the following
reformulation of the above theorem:

\begin{The*}
  Every immersed Willmore torus $f\colon T^2\rightarrow S^3$ in the conformal
  3--sphere $S^3$ has finite spectral genus. The spectral curve of $f$ is
  reducible if and only if $f$ is Euclidean minimal with planar ends with
  respect to the Euclidean geometry defined by some point $\infty\in S^3$ at
  infinity.
\end{The*}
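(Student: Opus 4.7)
The statement is essentially a repackaging of the preceding main theorem together with Theorem~5.1 and Corollary~5.3 of \cite{B}. My plan is to argue by the dichotomy ``is $f$ Euclidean minimal with planar ends for some $\infty \in S^3$, or not?'' and to deduce both assertions (finite spectral genus and the characterization of reducibility) simultaneously from that case split.

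First I would establish finite spectral genus. If $f$ is not Euclidean minimal with planar ends for any point at infinity, then Theorem~5.1 together with Corollary~5.3 of \cite{B} asserts directly that the spectral curve of $f$ is irreducible and of finite genus, so nothing is to prove. In the opposite case, the preceding main theorem gives that the spectral curve is reducible; the general theory recalled in Appendix~\ref{sec:spectral_curve} then shows it splits into two components exchanged by an anti-holomorphic involution, each a compact once-punctured Riemann surface of finite genus. Hence the full spectral curve has finite arithmetic genus in either case.

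For the equivalence I use the same dichotomy. The implication ``Euclidean minimal with planar ends $\Rightarrow$ reducible spectral curve'' is exactly the statement of the preceding main theorem. For the converse, I argue by contraposition: if $f$ is not Euclidean minimal with planar ends with respect to any $\infty \in S^3$, then by the cited results from \cite{B} its spectral curve is irreducible, so a reducible spectral curve forces $f$ to be Euclidean minimal with planar ends for some choice of point at infinity.

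The only delicate point, and the place where a careful reader will want to pause, is checking that the notion of spectral curve employed in \cite{B} in the Willmore setting coincides with the Dirac-operator spectral curve \eqref{eq:dirac-op} underlying the main theorem of the present paper. Once this compatibility is in place, the argument reduces to the bookkeeping sketched above and no further analytic input is required.
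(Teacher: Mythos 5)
Your proposal is correct and follows essentially the same route as the paper: the paper obtains this theorem precisely by combining Theorem~\ref{the:main} with Theorem~5.1 and Corollary~5.3 of \cite{B} (irreducibility and finite genus when $f$ is not Euclidean minimal with planar ends), and in the reducible case it derives finite genus and the two once-punctured components from the general spectral curve theory of Appendix~\ref{sec:spectral_curve}, exactly as you do. The compatibility of the spectral curve notions you flag is the same quaternionic holomorphic line bundle spectral curve (via Theorem~\ref{the:moebius_inv_of_spec} and \cite{BPP09}), so no extra argument is needed.
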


To our knowledge, Dirac operators \eqref{eq:dirac-op} corresponding to
Euclidean minimal tori with planar ends are the first known examples
of 2--dimensional periodic Dirac operators with smooth potential and
reducible Floquet spectral curve. (Such Dirac potentials are indeed
globally smooth, because Euclidean minimal tori with planar ends
extend to globally smooth immersions into the conformal 3--sphere
$S^3=\R^3\cup \{\infty\}$.)

In the following we sketch our argument why Euclidean minimal tori
with planar ends have reducible spectral curves.  A conformally
immersed torus $f\colon T^2\rightarrow S^3$ gives rise to Dirac type
operators with smooth potentials in two different ways. One is
M\"obius invariant (Section~\ref{sec:moebius-inv}), the other one
depends on the Euclidean geometry defined by the choice of a point
$\infty\in S^3$ at infinity \emph{not lying on the image of the
  immersion} (Section~\ref{sec:weierstrass}). By
Theorem~\ref{the:moebius_inv_of_spec}, all different Dirac type
operators obtained this way from a conformally immersed torus give
rise to the same spectral curve, the spectral curve of the immersion~$f$.

A general point $\infty\in S^3$ at infinity that \emph{lies on the
  image of the immersion} gives rise (as in
Section~\ref{sec:weierstrass}) to a Dirac type operator with a
non--smooth potential. However, in the special case that the resulting
immersion into Euclidean 3--space is minimal (and then necessarily has
planar ends), the corresponding Dirac operator has a trivial
potential, i.e., is of the form \eqref{eq:dirac-op} with $U=0$.

The main result of the paper (Theorem~\ref{the:main}) is derived from
the fact that the spectral curve of a Euclidean minimal torus with
planar ends can be computed using this Dirac operator
\eqref{eq:dirac-op} with trivial potential $U=0$ if one imposes
suitable boundary conditions at the ends. The reducibility of the
spectral curve then reflects the decomposition of the Dirac operator
into pure $\dbar$-- and $\partial$--operators. In particular, the
components of the spectral curve coincide (Corollary \ref{cor:main})
with spectral curves of $\dbar$--operators on punctured elliptic
curves as discussed in \cite{BT1}

\section{The spectral curve of a Euclidean minimal torus with planar
  ends}\label{sec:spec_minimal}

In the proof of our main result we make use of the quaternionic
approach to conformal surface geometry, see
Appendix~\ref{app:quat_hol}, and treat minimal tori with planar ends
in Euclidean 3--space as conformal immersions $f\colon T^2 \rightarrow
\HP^1$ with values in the conformal 3--sphere $\R^3\cup
\{\infty\}\subset \HP^1$, where $\R^3=\Im\H$ is identified with
$\{[x,1]\mid x\in \Im\H\}\subset \HP^1$ and $\infty = [1,0]$.

We call an immersion $f\colon M\rightarrow \R^3\cup \{\infty\}$ of a
compact surface $M$ \emph{Euclidean minimal with planar ends} if it is
globally smooth and the immersion $f\colon M \backslash
\{p_1,...,p_n\} \rightarrow \R^3$ is Euclidean minimal, where
$\{p_1,...,p_n\}$ denotes the finite set of \emph{ends} at which $f$
goes through $\infty$. The minimal immersion $f\colon M \backslash
\{p_1,...,p_n\} \rightarrow \R^3$ is then complete, has finite total
curvature, and planar ends and, conversely, every such immersion can
be compactified to an immersion into the conformal 3--sphere,
see~\cite[pp.~44--49]{Br84} which moreover proves:

\begin{Lem}\label{lem:minimal_planar}
  A map $f\colon M \rightarrow \R^3\cup \{\infty\}$ defined on a
  compact Riemann surface $M$ is conformal and Euclidean minimal with
  planar ends $\{p_1, ..., p_n\}$ if and only if the $\C^3$--valued
  1--form $\partial f$ is null\footnote{Here $\partial f :=
    \frac12(df-i*df)$, where $*$ denotes the induced complex structure
    on $T^*M$ and $i$ stands for the complex structure of the
    complexification $\C^3$ of $\R^3$; the fact that $\partial f$ is
    null with respect to the complex bilinear extension of the
    Euclidean metric reflects conformality of the immersion.},
  holomorphic, and nowhere vanishing on $M \backslash \{p_1, ..., p_n\}$
  and has second order poles without residues at the
  $\{p_1, ..., p_n\}$.
\end{Lem}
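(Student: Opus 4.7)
The plan is to reduce to the classical Weierstrass representation on the complement of the ends and to analyze the local behavior near each $p_i$ using the coordinate chart at $\infty\in S^3$ given by the inversion $\sigma(x)=x/|x|^2$.

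For the \emph{only if} direction, the open set $M\setminus\{p_1,\dots,p_n\}$ carries a conformal minimal immersion into $\R^3$. Conformality is equivalent to $\partial f$ being null for the complex-bilinear extension of the Euclidean metric; minimality of a conformal immersion is equivalent to harmonicity of $f$, hence to $\partial f$ being $\C^3$-holomorphic; and the immersion condition is $\partial f\neq 0$. This handles the statement away from the $p_i$. At each end, choose a holomorphic coordinate $z$ vanishing at $p_i$ and use that $\sigma\circ f$ is smooth at $0$ with value $0$, so that $f$ blows up like a pole. By Huber--Osserman, $\partial f$ extends meromorphically with a pole of finite order; the smoothness of the global extension $f\colon M\to S^3$ together with the end being planar rather than catenoidal then forces this pole to have order exactly $2$ and zero residue: a nonzero residue would produce a $\log|z|$ term in $f$, yielding a catenoidal end, while a higher order pole would obstruct smoothness of $\sigma\circ f$ at $z=0$.

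For the \emph{if} direction, the same classical dictionary translates the three hypotheses on $\partial f$ on $M\setminus\{p_i\}$ into conformality, minimality, and the immersion property of $f$ there. What remains is to check that $f$ extends smoothly across each $p_i$ as a map to $S^3$ sending $p_i\mapsto\infty$, with a planar asymptotic profile. Locally $\partial f=(a/z^2+\text{holomorphic})\,dz$ with $a\in\C^3\setminus\{0\}$ and $a\cdot a=0$, and absence of residues makes the integral $f=f(z_0)+2\,\Re\int_{z_0}^z\partial f$ single-valued in a punctured neighborhood of $p_i$; hence $f(z)=-a/z-\bar a/\bar z+O(1)$. Since $a\cdot a=0$ while $a\cdot\bar a=|a|^2>0$, the Euclidean norm satisfies $|f(z)|^2\sim 2|a|^2/|z|^2$, and a direct computation shows that $\sigma\circ f$ is smooth at $z=0$ with value $0$. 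The asymptotic image of the end lies in the real $2$-plane spanned by $\Re a$ and $\Im a$, giving a planar end.

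The main technical obstacle is this local analysis at each end: the interplay of pole order, vanishing residue, and nullity of the leading coefficient, which together are \emph{exactly} what is needed for a smooth planar compactification in $S^3$. This is carried out in detail in Bryant~\cite[pp.~44--49]{Br84}; everything else is the standard Weierstrass translation between $\partial f$ and the geometric properties of $f$.
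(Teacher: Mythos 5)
Your proposal is correct and takes essentially the same route as the paper, which offers no proof of its own but simply invokes Bryant \cite[pp.~44--49]{Br84} for precisely this Weierstrass dictionary plus the inversion analysis at the ends that you sketch. One minor imprecision: for a residue--free pole of order $\geq 3$ the inverted map $\sigma\circ f$ (in your notation) can still extend smoothly across the puncture --- since $|\sigma\circ f|=O(|z|^2)$ its differential vanishes at $z=0$ --- so what such poles actually violate is the requirement that $f$ be an \emph{immersion} at $p_i$, rather than smoothness of $\sigma\circ f$.
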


We describe now how the spinorial Weierstrass representation of
minimal surfaces in $\R^3$ appears in the quaternionic framework.
Recall that for an immersion $f\colon M\rightarrow \R^3\cup\{\infty\}$
into the conformal 3--sphere, the corresponding quaternionic line
subbundle $L\subset V$ of the trivial rank 2 bundle $V$ carries a
M\"obius invariant structure of a quaternionic spin bundle with
compatible $\dbar$--operator (Appendix~\ref{sec:weierstrass}). The
fact that the immersion $f$ is Euclidean minimal with planar ends is
equivalent to the fact that the Euclidean quaternionic holomorphic
structure induced by $\infty$ (Appendix~\ref{sec:weierstrass}) has
vanishing Hopf field $Q$ and coincides with the underlying
$\dbar$--operator (the reason being that the Hopf field of a Euclidean
holomorphic line bundle coincides with the mean curvature half
density, see e.g.\ \cite{PP}, and hence vanishes precisely if the
immersion is minimal).  Note that, unlike the M\"obius invariant
underlying $\dbar$--operator, the quaternionic holomorphic section
$\psi$ appearing in the Weierstrass representation is only defined
away from the ends $\{p_1,...,p_n\}$ at which the immersion goes
through $\infty$.

In the case that $M=T^2=\C/\Gamma$ is a torus, the canonical bundle is
holomorphically trivialized $K\cong \underline{\C}$ by the
differential $dz$ of a uniformizing coordinate. The complex spin
bundle $E$ underlying $L$ thus has a nowhere vanishing
$\dbar$--holomorphic section $\varphi$ with $\Z_2$--monodromy $h_0\in
\Hom(\Gamma,\Z_2)$ satisfying $(\varphi,\varphi) = \jj dz$. The
quaternionic holomorphic section $\psi$ appearing in the Weierstrass
representation (Appendix~\ref{sec:weierstrass}) then takes the form
$\psi=\varphi( s_1+ i s_2 \jj)$, where $s_1$, $s_2$ are holomorphic
functions with $\Z_2$--monodromy~$h_0$ defined away from the
ends. Hence
\begin{equation}
  \label{eq:weierstrass}
  df= (\psi,\psi) = (\jj s_1 + \ii s_2) dz (s_1 + \ii s_2 \jj)= \jj ( s_1^2\,
  dz -\bar s_2^2 \, d\bar z) + 2\ii \Re(s_1 s_2 \, dz)
\end{equation}
and, with respect to the basis $\ii$, $\jj$, $\kk$ of $\R^3=\Im\H$, we
obtain the spinorial\footnote{If $M$ is not a torus, but an arbitrary
  Riemann surface, the argument of this paragraph only holds locally;
  if $K$ is not globally trivialized, a global formula can be obtained
  by absorbing $\sqrt{dz}$ and the $\Z_2$--monodromy into $s_1$, $s_2$
  and viewing them as spinor fields instead of functions (then $dz$
  disappears in \eqref{eq:classical_weierstrass} and $\varphi$ further
  above has to be divided by $\sqrt{dz}$), see e.g.\ \cite{Bob,KS} for
  this globalized version.}  Weierstrass representation
\begin{equation}
  \label{eq:classical_weierstrass}
  df =
  \Re \begin{pmatrix}
    2 s_1 s_2\,  dz  \\ (s_1^2 - s_2^2)\,  dz \\ i (s_1^2 + s_2^2)\, dz 
\end{pmatrix}
\end{equation}
for minimal surfaces, see \cite{Wei} for the original and
\cite{Bob,KS} for contemporary, coordinate independent
versions. Because $\partial f$ in Lemma~\ref{lem:minimal_planar}
coincides (up to a factor 1/2) with the $\C^3$--valued 1--form in
\eqref{eq:classical_weierstrass}, we obtain:

\begin{Cor}\label{cor:spinors}
  A map $f\colon T^2\backslash\{p_1,...,p_n\}\rightarrow \R^3$ is
  minimal with planar ends $p_1$,...,$p_n$ if and only if the
  meromorphic functions $s_1$, $s_2$ in
  \eqref{eq:classical_weierstrass} have poles of order at most one at
  the ends $p_1$,...,$p_n$, vanishing order zero terms in their
  Laurent expansions at the ends, and no common zeroes.
\end{Cor}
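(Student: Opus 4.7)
The plan is to deduce the corollary directly from Lemma~\ref{lem:minimal_planar} by translating each of its conditions on $\partial f$ into conditions on the spinors $s_1$, $s_2$ using the explicit expression~\eqref{eq:classical_weierstrass}, which (up to the factor $1/2$) writes $\partial f$ as the $\C^3$--valued 1--form with components $2s_1s_2\,dz$, $(s_1^2-s_2^2)\,dz$, and $i(s_1^2+s_2^2)\,dz$.

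The conditions away from the ends are all immediate. Nullity is the algebraic identity
\begin{equation*}
(2s_1s_2)^2+(s_1^2-s_2^2)^2+\bigl(i(s_1^2+s_2^2)\bigr)^2=0,
\end{equation*}
valid identically. Holomorphicity of $\partial f$ on $T^2\setminus\{p_1,\ldots,p_n\}$ is equivalent to holomorphicity there of $s_j^2$ and $s_1s_2$, which (since the $s_j$ are meromorphic) is equivalent to holomorphicity of $s_1, s_2$ themselves. The three components of~\eqref{eq:classical_weierstrass} vanish simultaneously if and only if $s_1=s_2=0$, so $\partial f$ is nowhere vanishing on $T^2\setminus\{p_1,\ldots,p_n\}$ if and only if $s_1$ and $s_2$ have no common zeros there.

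The substantive step is to translate the condition ``second order pole without residue at each $p_i$''. I would fix an end $p$ and a local holomorphic coordinate $z$ vanishing at $p$, and write $s_j(z)=a_jz^{-1}+b_j+O(z)$, allowing higher-order poles a priori. The requirement that the three components of~\eqref{eq:classical_weierstrass} have poles of order at most two forces each of $s_j^2$ and $s_1s_2$ to have poles of order at most two, which rules out any $s_j$ having a pole of order $\geq 2$. The vanishing of the $z^{-1}$ coefficients in $s_j^2$ and $2s_1s_2$ then yields
\begin{equation*}
a_1b_1=0,\quad a_2b_2=0,\quad a_1b_2+a_2b_1=0.
\end{equation*}
Since $p$ is an end, $f(p)=\infty$, so at least one of $a_1, a_2$ must be nonzero; a short case analysis then shows this system is equivalent to $b_1=b_2=0$, i.e., to vanishing order zero terms in the Laurent expansions of $s_1, s_2$ at $p$.

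No genuine obstacle is anticipated: the argument is essentially a direct bookkeeping exercise, with the only mildly subtle step being the Laurent-coefficient analysis at the ends, which is resolved by the short case distinction above.
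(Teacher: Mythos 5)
Your proposal is correct and follows the same route the paper intends: the paper derives the corollary immediately from Lemma~\ref{lem:minimal_planar} by identifying $\partial f$ (up to the factor $1/2$) with the $\C^3$--valued form in \eqref{eq:classical_weierstrass}, and your Laurent--coefficient bookkeeping at the ends, including the case analysis showing $a_1b_1=a_2b_2=a_1b_2+a_2b_1=0$ with some $a_j\neq 0$ forces $b_1=b_2=0$, is exactly the translation the paper leaves implicit.
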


Conversely, if one starts with meromorphic functions $s_1$, $s_2$ on
$T^2=\C/\Gamma$ with identical $\Z_2$--monodromy and zero and pole
behavior as in the corollary, then \eqref{eq:classical_weierstrass}
defines the differential of a possibly double periodic minimal torus
with planar ends.  The vanishing of the translational periods in
direction of $\gamma\in \Gamma$ is then equivalent to
\begin{equation}
  \label{eq:period_condition}
  \int_\gamma s_1 s_2\, dz \in \ii \R \qquad \text{and}
  \qquad \int_\gamma s_1^2\, dz = \int_\gamma \bar{s}_2^2\, d\bar z.
\end{equation}
In order to obtain a closed torus this has to hold for all $\gamma\in
\Gamma$; for a torus with one translational period it has to hold for
one generator of $\Gamma$.

\begin{The}\label{the:main}
  The spectral curve of a Euclidean minimal torus $f\colon T^2 \rightarrow
  \R^3\cup \{\infty\}$ with planar ends is reducible.
\end{The}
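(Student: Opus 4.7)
The plan is to compute the spectral curve of $f$ by using the Euclidean Dirac operator \eqref{eq:dirac-op} corresponding to the choice of $\infty\in S^3$ \emph{on} the image of $f$, namely at a planar end. Minimality forces the Dirac potential to vanish on the smooth locus, so the associated $D$ decouples into a pure $\dbar$-- and a pure $\partial$--operator; the resulting decomposition of the Floquet spectrum then yields reducibility of the spectral curve.

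First, I would identify the spectral curve of $f$ with the Floquet spectrum of $D$ with $U=0$ subject to boundary conditions at the ends. By Theorem~\ref{the:moebius_inv_of_spec}, the spectral curve is an invariant of the conformal class of $f$, so in principle it can be computed from the Weierstrass data. Concretely, any Floquet eigenfunction has to be of the form $\psi=\varphi(s_1+is_2\jj)$ as in \eqref{eq:weierstrass}, where now $s_1,s_2$ are merely required to be meromorphic/antimeromorphic functions on $T^2$ with a common $\Z_2$--monodromy character $h_0$ twisted by some $h\in \Hom(\Gamma,\C^*)$ and to exhibit at the ends $p_1,\dots,p_n$ the pole behavior of Corollary~\ref{cor:spinors} (simple poles with vanishing zero--order Laurent coefficient), but without the no--common--zero assumption since we parametrize all spinor solutions and not only immersion data. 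This singular Floquet problem recovers the spectral curve because the pole conditions at the ends encode exactly the distributional singularity of the Euclidean potential that would appear if $\infty$ were moved infinitesimally off the image.

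Second, since $U=0$ the Dirac system $D\psi=0$ decouples: the equation for $s_1$ becomes $\dbar s_1=0$ and the equation for $s_2$ becomes $\partial s_2=0$. Consequently the space of Floquet sections with character $h$ splits as a direct sum, and $h$ belongs to the Floquet spectrum precisely if at least one of the two scalar problems admits a nontrivial solution. This exhibits the spectrum as the union of two subsets: one consisting of characters for which a meromorphic section of the spin bundle with the prescribed pole behavior and monodromy $h\cdot h_0$ exists, and its complex conjugate obtained from the $\partial$--equation. These two subsets are exchanged by the anti--holomorphic reality involution of the spectral curve, providing the required decomposition into (at least) two components.

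Finally, by construction each component is the spectral curve of a $\dbar$--operator acting on meromorphic sections of a spin bundle on the punctured elliptic curve $T^2\setminus\{p_1,\dots,p_n\}$ with controlled pole behavior; by \cite{BT1} such curves are of finite genus and arise as spectral curves of elliptic KP solitons, which is the content of Corollary~\ref{cor:main} stated in the introduction. The hard part is the first step: justifying that the formally trivial Dirac operator together with the pole conditions from Corollary~\ref{cor:spinors} really does compute the M\"obius invariant spectral curve of $f$. The spectral theory in the rest of the paper is set up for smooth potentials (i.e.\ for $\infty$ off the image), and one has to verify carefully that the limit in which $\infty$ approaches the image introduces no spurious components and drops none, with the pole data at the ends playing exactly the role of the distributional source that disappears with $U$.
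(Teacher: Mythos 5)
Your guiding idea---that the reducibility should come from the decoupling of the Dirac operator with $U=0$ into $\dbar$-- and $\partial$--parts, with first order poles and vanishing zero--order Laurent terms imposed at the ends---is indeed the strategy announced in the paper. But the step you yourself flag as ``the hard part'' is precisely the substance of the theorem, and your proposal leaves it at the level of a heuristic (the pole conditions ``encode the distributional singularity'' of a potential obtained by moving $\infty$ infinitesimally off the image). The paper does not argue by any such limit and never touches distributional potentials: it works directly with the definition of the spectral curve via holomorphic sections $\psi^h$ of the M\"obius invariant bundle $V/L$ with monodromy $h$, writes $\psi^h=\psi_1\chi$ and $\nabla\psi^h\cong\varphi(\Phi_1+\Phi_2\jj)$ away from the ends, and then establishes an exact bijective correspondence between such sections and pairs $(\Phi_1,\Phi_2)$ of meromorphic functions with monodromies $h\cdot h_0$, $\bar h\cdot h_0$ and the stated pole behavior. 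This requires two nontrivial local analyses at each end that are absent from your proposal: (i) to show $\Phi_1,\Phi_2$ can have at most simple poles with vanishing zero--order terms, one uses the auxiliary smooth function $g_1=\chi_2+f^{-1}g_2$ built from $\psi_2$ and a residue (period) condition at the end, whose $2\times 2$ coefficient matrix has determinant $|s_1(-1)|^2+|s_2(-1)|^2\neq 0$ because the end is planar; and (ii) conversely, a given admissible pair must be integrated to a function $\chi$ with multiplicative monodromy $h$ (Lemma~\ref{lem:integration_preserving_monodromy}) and one must prove that $\psi_1\chi=\psi_2\chi_2$ extends \emph{smoothly} through the ends, via the bootstrap $dg_1\to dg_2=f\,dg_1\to g_2\in C^0\to\chi_2\in C^1$ and elliptic regularity. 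Without this correspondence the identification of the spectral curve with your singular Floquet problem---in particular that no components are created or lost---is unproven, so the proposal has a genuine gap at its core.

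A second, smaller gap: even granting the identification, writing the multiplier set as the union of a subset and its image under the anti--holomorphic involution does not by itself yield reducibility, since for irreducible spectral curves the real structure also maps the curve to itself, and your two subsets could a priori coincide as analytic sets. The paper closes this by the dimension count from Appendix~\ref{sec:spectral_curve}: for generic points the space of holomorphic sections with monodromy $h$ is complex one--dimensional, so generically exactly one of $\Phi_1$, $\Phi_2$ vanishes identically (otherwise plugging in $(\Phi_1,0)$ and $(0,\Phi_2)$ separately would give a two--dimensional space), which forces the two pieces to be genuinely distinct components. You need this, or an equivalent argument, to conclude.
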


Following from general properties of spectral curves, cf.\
Appendix~\ref{sec:spectral_curve}, the spectral curve of $f$ is thus
of the form $\Sigma=\Sigma'\, \dot\cup\, \bar \Sigma'$ for $\Sigma'$ a
Riemann surface of finite genus with one end.  More precisely
$\Sigma'$ is the spectral curve belonging to an elliptic KP soliton,
see Appendix~\ref{app:elliptic_KP}.  Theorem~\ref{the:main} holds more
generally for Euclidean minimal tori with planar ends and
translational monodromy like Riemann's ``staircase'' minimal surfaces, see
Section~\ref{sec:riemann}.

\begin{proof}
  The basic idea behind the proof is similar to that of
  Theorem~\ref{the:moebius_inv_of_spec}. An essential difference is
  that here we chose a point $\infty$ at infinity for which the
  immersion has ends, i.e., goes through $\infty$. As a consequence,
  the corresponding flat connection $\nabla$ on $V/L$ with
  $\nabla\psi_1 =0$ (cf.\ Appendix~\ref{sec:weierstrass}) is only
  defined away from the ends, because the section $\psi_1$ vanishes at
  the ends. Our strategy in order to determine the spectral curve of
  $V/L$ is to derive a characterization of sections of $KV/L\cong L$
  that are defined away from the ends and are of the form $\nabla
  \psi^h$ for $\psi^h$ a holomorphic section of $V/L$ with
  monodromy~$h$.

  As above denote by $\varphi$ a non--trivial $\dbar$--holomorphic
  section with $\Z_2$--monodromy $h_0$ of the complex line bundle $E$
  underlying $L$.  Let $\psi^h$ be a holomorphic section of $V/L$ with
  monodromy $h$.  It can be written as $\psi^h = \psi_1 \chi$, with
  $\chi$ a quaternion valued function with monodromy $h$ defined away
  from the ends.  Using the identification $KV/L\cong L$ via
  $\delta$, its derivative is then
  \[ \nabla \psi^h = \psi_1 d\chi \cong \varphi (\Phi_1+ \Phi_2
  \jj), \] where $\Phi_1$, $\Phi_2$ are complex holomorphic functions
  with monodromy $h\cdot h_0$ and $\bar h\cdot h_0$ defined away from
  the ends (that $\Phi_1$, $\Phi_2$ are complex holomorphic holds,
  because $\nabla \psi^h$ is a holomorphic section of $KV/L\cong L$,
  cf.\ Appendix~\ref{sec:weierstrass}, whose Hopf field $Q$ vanishes
  identically for $f$ Euclidean minimal). From $\nabla \psi_2 =
  -\psi_1 df \cong -\varphi (s_1+ \ii s_2 \jj)$ we thus obtain
  \begin{equation}\label{eq:d-chi}
    d\chi = df(s_1+ \ii s_2 \jj)^{-1}(\Phi_1+ \Phi_2 \jj) = (\jj s_1 +
  \ii s_2)dz(\Phi_1+ \Phi_2 \jj),
  \end{equation}
  where the last equality holds by
  \eqref{eq:weierstrass}.

  We prove now that $\Phi_1$ and $\Phi_2$ obtained from a holomorphic
  section $\psi^h$ with monodromy of $V/L$ have the same pole behavior
  as $s_1$ and $s_2$, i.e., their only possible poles are first order
  poles at the ends and their Laurent expansions at the ends have
  vanishing order zero terms. To prove the first claim, note that near
  every end the holomorphic section $\psi_2 = -\psi_1 f$
  (Appendix~\ref{sec:moebius-inv}) is non--vanishing so that $\psi^h =
  \psi_2 \chi_2$ for a smooth quaternion valued function
  $\chi_2$. Moreover, because $\psi_1 = -\psi_2 f^{-1}$ is holomorphic
  and $df^{-1}$ is non--zero at a planar end of $f$, there is a smooth
  quaternionic function $g_2$ defined by $d\chi_2 = -df^{-1} g_2$
  (this is in fact the quotient construction of
  Appendix~\ref{sec:moebius-inv}). Taking the derivative of $\chi =
  -f\chi_2$ yields
  \begin{equation}
    \label{eq:d-chi-g1}
    d\chi = -df(\chi_2 + f^{-1} g_2) = -df g_1
  \qquad \textrm{ with } \qquad
    g_1:= \chi_2 + f^{-1} g_2. 
  \end{equation}
  Because $g_1$ is smooth and, by \eqref{eq:d-chi}, away from the ends 
  \begin{equation}
    \label{eq:g1}
    g_1 = -(s_1+ \ii s_2 \jj)^{-1}(\Phi_1+
    \Phi_2 \jj),
  \end{equation} the pole behavior of $s_1$ and $s_2$ implies that 
  $\Phi_1$ and $\Phi_2$ at most have first order poles at the ends.

  The fact that $d\chi$ in \eqref{eq:d-chi} has no periods around a
  given end is equivalent to the fact that the closed forms
  \[ \jj s_1 \Phi_1 \, dz + \ii s_2 \Phi_2 \, dz \jj \qquad \textrm{
    and } \qquad \ii s_2 \Phi_1 \, dz - \bar{s_1} \bar{\Phi_2} \,
  d\bar z\] have vanishing residues at that end (i.e., the integrals
  $\frac1{2\pi \ii} \oint{}$ over small loops around the end are
  zero).  Because the Laurent expansions of $s_1$ and $s_2$ have no
  order zero terms at the end, the order zero terms $\Phi_1(0)$ and
  $\Phi_2(0)$ in the Laurent expansions of $\Phi_1$ and $\Phi_2$ have
  to satisfy
  \[
  \begin{pmatrix}
    s_1(-1)  & \ii \bar{s_2}(-1) \\
    \ii s_2(-1) & \bar{s_1}(-1)
  \end{pmatrix}
  \begin{pmatrix}
    \Phi_1(0) \\ \bar \Phi_2(0)
  \end{pmatrix}
  = 0,
  \]
  where $s_1(-1)$ and $s_2(-1)$ denote the residues of $s_1$ and $s_2$
  at the end.  Because at least one of $s_1$ and $s_2$ has a pole at
  the end, the determinant $|s_1(-1)|^2 + |s_2(-1)|^2$ of this
  $2\times 2$ matrix is non--zero and both $\Phi_1$, $\Phi_2$ have
  vanishing order zero terms at the end.

  Thus, every holomorphic section $\psi^h$ of $V/L$ with monodromy $h$
  can be obtained, by integrating \eqref{eq:d-chi}, from meromorphic
  functions $\Phi_1$ and $\Phi_2$ with monodromies $h\cdot h_0$ and
  $\bar h\cdot h_0$ and first order poles with vanishing order zero
  terms at the ends.

  We prove now that conversely every pair of meromorphic functions
  $\Phi_1$ and $\Phi_2$ with the given properties comes from a
  holomorphic section $\psi^h$ of $V/L$ with monodromy $h$.  By
  Lemma~\ref{lem:integration_preserving_monodromy} below, the form
  $d\chi$ obtained by plugging $\Phi_1$ and $\Phi_2$ into
  \eqref{eq:d-chi} can be integrated to a smooth function $\chi$ with
  monodromy $h$ which is defined away from the ends of the immersion.
  It therefore remains to check that the holomorphic section
  $\psi^h=\psi_1\chi= \psi_2 \chi_2$ thus defined smoothly extends
  through the ends. By \eqref{eq:g1}, the asymptotics of $s_1$, $s_2$
  and $\Phi_1$, $\Phi_2$ at the ends implies that the function $g_1$
  is smooth and $dg_1$ vanishes at the ends.  Using
  \eqref{eq:d-chi-g1} and the defining equation of $g_2$, the
  differential of $g_1$ becomes $dg_1 = f^{-1} dg_2$. Hence $dg_2 = f
  dg_1$ is bounded at the ends and smooth elsewhere so that by
  integration $g_2$ is $C^0$ at the ends. But now $d\chi_2 = -df^{-1}
  g_2$ implies that $\chi_2$ is $C^1$ at the ends. The holomorphic
  section $\psi_1 \chi = \psi_2 \chi_2$ is thus $C^1$ at the ends and
  hence smooth by elliptic regularity.

  So far we have shown that holomorphic sections $\psi^h$ of $V/L$
  with monodromy $h$ correspond to pairs of meromorphic functions
  $\Phi_1$ and $\Phi_2$ with monodromies $h\cdot h_0$ and $\bar h\cdot
  h_0$ and prescribed pole behaviors.  Because for generic points of
  the spectral curve the corresponding space of holomorphic sections
  of $V/L$ with monodromy $h$ is complex 1--dimensional
  (Appendix~\ref{sec:spectral_curve}), generically either $\Phi_1$ or
  $\Phi_2$ has to vanish identically. (Otherwise they would give rise
  to a complex 2--dimensional space of holomorphic sections with
  monodromy $h$, because one could separately plug $\Phi_1$ and $0$ or
  $0$ and $\Phi_2$ into \eqref{eq:d-chi}.)

  This shows that the spectral curve has two connected components,
  namely one on which the corresponding holomorphic sections $\psi^h$
  generically have vanishing $\Phi_2$ and one on which $\Phi_1$
  vanishes generically.  Both parts are interchanged under the
  anti--holomorphic involution which is induced by the symmetry
  $\psi^h\mapsto \psi^h\jj$.
\end{proof}

In the proof of Theorem~\ref{the:main} we have derived the following
characterization of multipliers $h$ admitting non--trivial holomorphic
sections of $V/L$ with monodromy $h$:

\begin{Lem}\label{lem:main}
  Let $f\colon T^2 \rightarrow \R^3\,\cup\, \{\infty\}$ be a Euclidean
  minimal torus with planar ends at $p_1$,...,$p_n\in T^2$ and induced
  spin structure corresponding to a $\Z_2$--multiplier $h_0$.  Then
  $V/L$ admits a holomorphic section $\psi^h$ with monodromy $h$ if
  and only if there is a meromorphic function $\Phi$ with monodromy
  $h\cdot h_0$ or $\bar h \cdot h_0$ that has at most first order
  poles and vanishing order zero terms at the $p_1$,...,$p_n\in T^2$.
\end{Lem}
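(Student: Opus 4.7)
The plan is to extract both implications from the correspondence established in the proof of Theorem~\ref{the:main}, which describes holomorphic sections of $V/L$ with prescribed monodromy in terms of pairs $(\Phi_1,\Phi_2)$ of meromorphic functions on $T^2$. So the lemma is essentially a re-packaging of what is already shown there; the task is to check that dropping one of $\Phi_1$, $\Phi_2$ is consistent with the correspondence in both directions.

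For the ``only if'' direction, suppose $\psi^h$ is a non--trivial holomorphic section of $V/L$ with monodromy $h$. The first half of the proof of Theorem~\ref{the:main} associates to $\psi^h$ a pair $(\Phi_1,\Phi_2)$ of meromorphic functions with monodromies $h\cdot h_0$ and $\bar h\cdot h_0$, each having at most first order poles with vanishing order zero Laurent coefficients at the ends. I would argue that at least one of the two must be non--zero, since otherwise \eqref{eq:d-chi} forces $d\chi=0$, hence $\chi$ is a constant quaternion, which is incompatible with $\psi^h$ being a non--trivial holomorphic section with non--trivial monodromy; the residual case $h=1$ would in any event be covered by exhibiting a suitable $\Phi$ directly via Riemann--Roch on the elliptic curve $T^2$. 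Taking $\Phi$ to be the non--vanishing member of $\Phi_1,\Phi_2$, together with the corresponding monodromy, proves the claim.

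For the ``if'' direction, given $\Phi$ with monodromy $h\cdot h_0$ (respectively $\bar h\cdot h_0$) and the stated pole behavior, set $(\Phi_1,\Phi_2)=(\Phi,0)$ (respectively $(0,\Phi)$) and substitute into~\eqref{eq:d-chi}. By Lemma~\ref{lem:integration_preserving_monodromy}, the resulting 1--form integrates to a quaternion--valued function $\chi$ with monodromy $h$ defined on $T^2$ away from $p_1,\dots,p_n$. Since $\Phi$ is not identically zero and $\jj s_1+\ii s_2$ is not identically zero on $T^2$, the 1--form $d\chi$ is non--zero, so $\chi$ is non--constant and $\psi^h:=\psi_1\chi$ is a non--trivial holomorphic section of $V/L$ with monodromy $h$ on the punctured torus.

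The only remaining step is to verify that $\psi^h$ extends smoothly across the ends to a global section of $V/L$; but this is exactly the content of the last paragraph of the existence argument in the proof of Theorem~\ref{the:main}, where the asymptotics of $s_1,s_2,\Phi_1,\Phi_2$ at the ends were used to show that the auxiliary functions $g_1$ and $g_2$ are smooth and $C^0$ respectively, forcing $\chi_2$ to be $C^1$ and then smooth by elliptic regularity. Thus no new technical obstacle arises; the main delicate point (smooth extension at the ends) has already been settled in the proof of Theorem~\ref{the:main} and can simply be invoked.
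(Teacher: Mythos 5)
Your proposal is correct and follows essentially the paper's own route: the paper obtains Lemma~\ref{lem:main} precisely as a by-product of the correspondence, established in the proof of Theorem~\ref{the:main}, between holomorphic sections of $V/L$ with monodromy $h$ and pairs $(\Phi_1,\Phi_2)$, including the converse construction via \eqref{eq:d-chi}, Lemma~\ref{lem:integration_preserving_monodromy}, and the smooth extension across the ends. The only quibble is your aside on the residual case $h=1$: Riemann--Roch by itself does not yield the vanishing order zero terms, but no such argument is needed since $\Phi=s_1$ (monodromy $h_0$, correct pole behavior by Corollary~\ref{cor:spinors}) already serves.
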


Because the spectral curve is defined as the Riemann surface
normalizing the set of Floquet multipliers $h$ that belong to
non--trivial holomorphic sections of $V/L$, it can be computed by the
following corollary.

\begin{Cor}\label{cor:main}
  The set of multipliers $h\in \Hom(\Gamma,\C_*)$ admitting a
  non--trivial meromorphic function $\Phi$ with monodromy $h$ on
  $T^2=\C/\Gamma$ such that 
  \begin{itemize}
  \item[a)] all poles are of first order and located at the ends
    $p_1$,...,$p_n\in T^2$ and
  \item[b)] the Laurent series at the ends have vanishing order zero
    terms
  \end{itemize}
  is a 1--dimensional complex analytic set.  Its normalization is one
  connected component of the spectral curve of the minimal torus with
  planar ends and hence a compact Riemann surface with one puncture.
\end{Cor}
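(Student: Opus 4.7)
The plan is to deduce the corollary directly from Lemma~\ref{lem:main} together with the finer structure extracted inside the proof of Theorem~\ref{the:main}, appealing to the general spectral curve theory of Appendix~\ref{sec:spectral_curve} only at the end to upgrade the set-theoretic picture into the stated analytic and topological statements. The guiding observation is that Lemma~\ref{lem:main} already identifies the set $S$ of multipliers described in the corollary, up to the biholomorphism $h\mapsto h\cdot h_0$ of $\Hom(\Gamma,\C_*)$, with one of the two pieces of the Floquet support of $V/L$.

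First I would make this dictionary precise. In the proof of Theorem~\ref{the:main} it was shown that every holomorphic section $\psi^h$ of $V/L$ with monodromy $h$ arises via \eqref{eq:d-chi} from a pair $(\Phi_1,\Phi_2)$ of meromorphic functions with monodromies $h\cdot h_0$ and $\bar h\cdot h_0$ and with the pole and Laurent behavior stated in the corollary; moreover the spectral curve $\Sigma$ splits as $\Sigma'\,\dot\cup\,\bar\Sigma'$, where $\Phi_2\equiv 0$ on $\Sigma'$ and $\Phi_1\equiv 0$ on $\bar\Sigma'$ generically. Restricting attention to $\Sigma'$ and composing with $h\mapsto h\cdot h_0$ yields a holomorphic map $\Sigma'\to S$; the two-fold ambiguity ``$h\cdot h_0$ or $\bar h\cdot h_0$'' in Lemma~\ref{lem:main} is accounted for by the component $\bar\Sigma'$ via the anti-holomorphic involution $\psi^h\mapsto \psi^h\jj$. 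Once $\Sigma'\to S$ is shown to be a normalization, the 1-dimensionality of $S$ as a complex analytic subset of $\Hom(\Gamma,\C_*)$ follows automatically, and the last assertion of the corollary reduces to the assertion that $\Sigma'$ is a compact Riemann surface with a single puncture. The latter I would deduce from Theorem~\ref{the:main} together with the general fact recalled in the introduction and in Appendix~\ref{sec:spectral_curve} that each irreducible component of a reducible Willmore spectral curve has finite genus and is compactified by a single point at the vacuum asymptote.

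The main point requiring care, which I expect to be the principal obstacle, is verifying that the map $\Sigma'\to S$ is genuinely a normalization rather than merely a generically finite surjection onto its image. For this I would argue that at a generic point of $\Sigma'$ the space of holomorphic sections of $V/L$ with given monodromy $h$ is complex one-dimensional (Appendix~\ref{sec:spectral_curve}); combined with $\Phi_2\equiv 0$ on $\Sigma'$ and with the fact that $\psi^h\not\equiv 0$ forces $\Phi_1\not\equiv 0$ through \eqref{eq:d-chi}, this shows that at generic points $\psi^h$ is determined up to scalar by the multiplier $h\cdot h_0$ of $\Phi_1$. The map $\Sigma'\to S$ is therefore generically injective, and together with the surjectivity already established inside the proof of Theorem~\ref{the:main} this yields the required normalization property, completing the argument.
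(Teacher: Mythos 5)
Your proposal is correct and takes essentially the same route as the paper, which obtains the corollary directly from Lemma~\ref{lem:main}, the definition of the spectral curve as the Riemann surface normalizing the Floquet multipliers of $V/L$ (Appendix~\ref{sec:spectral_curve}), the two--component decomposition with involution $\psi^h\mapsto\psi^h\jj$ established in the proof of Theorem~\ref{the:main}, and the general finite--genus/one--puncture facts for reducible spectral curves. The one point to reorder: the $1$--dimensional analyticity of the multiplier set should be quoted from the general theory of \cite{BPP09} via the identification $h\mapsto h\cdot h_0$ of $S$ with (one half of) the Floquet support, rather than deduced afterwards from the map $\Sigma'\to S$ being a normalization (the image of a non--proper holomorphic map need not be analytic, and ``normalization'' presupposes analyticity); with that adjustment your generic--injectivity argument is consistent with the paper's implicit reasoning.
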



In Theorem~\ref{the:main} and the following discussion we derive
properties of spectral curves (like number of ends and components and
finiteness of the genus) of Euclidean minimal tori with planar ends
from the general theory of spectral curves of immersed tori in the
conformal 3--sphere (Appendix~\ref{sec:spec-immersion}), a special
case of the theory of spectral curves for periodic 2--dimensional Dirac
operators (Appendix~\ref{sec:spectral_curve}).

Corollary~\ref{cor:main} indicates an alternative way to study
spectral curves of Euclidean minimal tori with planar ends. This is
further discussed in \cite{BT1}, where we more generally define
spectral curves of $\dbar$--operators on punctured elliptic curves
with boundary conditions as described in Corollary~\ref{cor:main} and
show that they coincide with elliptic KP spectral curves. It turns out
that Krichever's ansatz \cite{Kr} yields an algebraic approach to
spectral curves and Floquet functions of $\dbar$--operators on
punctured elliptic curves with the given boundary conditions.

\subsection*{Appendix to Section~\ref{sec:spec_minimal}}

The following lemma is needed in the proof of Theorem~\ref{the:main}.
 
\begin{Lem} \label{lem:integration_preserving_monodromy} Let $\omega$
  be a meromorphic 1--form with non--trivial monodromy $h\in
  \Hom(\Gamma,\C_*)$ and vanishing residues on a torus
  $T^2=\C/\Gamma$. Then there exists a unique meromorphic function $f$
  with monodromy $h$ on $T^2$ that satisfies $df=\omega$.
\end{Lem}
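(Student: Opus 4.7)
The plan is to pass to the universal cover $\C\to T^2=\C/\Gamma$, integrate $\omega$ there, and then adjust by a single additive constant to obtain the correct monodromy.

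First I lift $\omega$ to a meromorphic 1--form $\tilde\omega$ on $\C$ satisfying $\gamma^*\tilde\omega = h(\gamma)\tilde\omega$ for all $\gamma\in\Gamma$. Because the residues of $\omega$ vanish on $T^2$, so do those of $\tilde\omega$ at every pole on $\C$. Fixing a basepoint $0$ which is not a pole, the integral $F(z):=\int_0^z\tilde\omega$ along any path in $\C$ avoiding poles is path--independent (simple connectedness of $\C$ together with vanishing residues), and the resulting holomorphic function on the complement of the poles extends meromorphically across each pole, since the local antiderivative of a pole without residue is meromorphic. This yields a global meromorphic function $F$ on $\C$ with $dF=\tilde\omega$, unique up to an additive constant.

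Next I compare $F(z+\gamma)$ with $h(\gamma)F(z)$. The differential of the difference equals $\gamma^*\tilde\omega - h(\gamma)\tilde\omega = 0$, so $c(\gamma):=F(z+\gamma)-h(\gamma)F(z)$ is constant in $z$. Computing $F(z+\gamma_1+\gamma_2)$ two ways produces the cocycle identity $c(\gamma_1+\gamma_2)=c(\gamma_1)+h(\gamma_1)c(\gamma_2)$, so $c$ is a 1--cocycle on $\Gamma$ with values in $\C$ equipped with the $h$--twisted action.

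The crux is to show that $c$ is a coboundary, i.e.\ that there exists $k\in\C$ with $c(\gamma)=(h(\gamma)-1)k$ for all $\gamma\in\Gamma$; then $f:=F-k$ satisfies $f(z+\gamma)=h(\gamma)f(z)$ and $df=\tilde\omega$, which is the desired meromorphic function on $T^2$. Since $h$ is non--trivial, at least one generator $\gamma_1$ of $\Gamma$ has $h(\gamma_1)\neq 1$; set $k:=c(\gamma_1)/(h(\gamma_1)-1)$. The required compatibility $c(\gamma_2)=(h(\gamma_2)-1)k$ on the second generator follows from the cocycle relation applied to $\gamma_1+\gamma_2=\gamma_2+\gamma_1$, which yields $(h(\gamma_1)-1)c(\gamma_2)=(h(\gamma_2)-1)c(\gamma_1)$. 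This compatibility check is the step I expect to be the subtle one; it amounts to the vanishing $H^1(\Gamma,\C_h)=0$ for non--trivial characters $h$ of $\Gamma\cong\Z^2$. Uniqueness is then immediate: two solutions $f_1,f_2$ differ by a constant with monodromy $h\neq 1$, which forces the constant to vanish.
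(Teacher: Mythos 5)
Your proposal is correct and is essentially the paper's own argument: the paper likewise takes a meromorphic primitive $\tilde f$ of $\omega$, observes that $a_\gamma=\tilde f(z+\gamma)-h_\gamma\tilde f(z)$ is constant in $z$, derives $a_{\gamma_1}(h_{\gamma_2}-1)=a_{\gamma_2}(h_{\gamma_1}-1)$ from commutativity of $\Gamma$, and removes the additive monodromy by adding a constant, so your cohomological phrasing $H^1(\Gamma,\C_h)=0$ and your explicit construction of the primitive on the universal cover (using simple connectedness and the vanishing residues) merely spell out what the paper leaves implicit. One harmless sign slip: with your conventions $c(\gamma)=(h(\gamma)-1)k$ and $k=c(\gamma_1)/(h(\gamma_1)-1)$, the correct adjustment is $f:=F+k$ rather than $F-k$ (or equivalently redefine $k:=c(\gamma_1)/(1-h(\gamma_1))$).
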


\begin{proof}
  Take $\tilde f$ an arbitrary meromorphic function with $d\tilde f =
  \omega$. Because $\omega_{z+\gamma} = \omega_z h_\gamma$ for
  $\gamma\in \Gamma$, there is $a_\gamma\in \C$ such that $\tilde
  f(z+\gamma)- \tilde f(z)h_\gamma = a_\gamma$ for all $z$. Now
  \[ \tilde f(z+\gamma_1 + \gamma_2) = \tilde f(z)h_{\gamma_1}
  h_{\gamma_2} + a_{\gamma_1} h_{\gamma_2} + a_{\gamma_2}\] and,
  because $\tilde f(z+\gamma_1 + \gamma_2) = \tilde f(z+\gamma_2 +
  \gamma_1)$ and $h$ is a representation, we obtain $a_{\gamma_1}
  h_{\gamma_2} + a_{\gamma_2} = a_{\gamma_2} h_{\gamma_1} +
  a_{\gamma_1}$ and hence 
  \begin{equation}
    \label{eq:ah}
    a_{\gamma_1}(h_{\gamma_2} -1 ) =
    a_{\gamma_2}(h_{\gamma_1} -1 ).    
  \end{equation}
  In particular, because $h$ is non--trivial, we have $a_\gamma=0$ for
  all $\gamma\in\Gamma$ such that $h_\gamma = 1$. On the other hand,
  adding a constant $b\in C$ to $\tilde f$ changes $a_\gamma$ to
  $a_\gamma + b(1-h_\gamma)$. For $\gamma\in \Gamma$ such that
  $h_\gamma\neq 1$ we define $b= \frac{a_\gamma}{h_\gamma-1}$. By
  \eqref{eq:ah} the definition of $b$ does not depend on the choice of
  $\gamma$ and $b$ is the unique solution to $a_\gamma +
  b(1-h_\gamma)=0$ for all $\gamma\in \Gamma$.  In particular, among
  the meromorphic functions satisfying $df=\omega$, the function
  $f=\tilde f + b$ is the unique one with multiplicative monodromy $h$
  (and no additional ``additive monodromy'').
\end{proof}

\section{Example: Minimal tori with four planar ends at the half
  periods}\label{sec:four_ends}
 
The smallest number of ends possible for Euclidean minimal tori with
planar ends is four, as shown by Kusner and Schmitt \cite{KS}.  This
is analogous to the case of minimal spheres with planar ends
\cite{Br84} which, except for the plane, have also at least four ends.
The first examples of Euclidean minimal tori with four planar ends are
given by Costa \cite{Co} (who treats rectangular tori) and Kusner and
Schmitt \cite{KS} and have ends located at the half periods.

In the following we show that the spectral curve of a minimal torus
with four planar ends located at the half periods is the double of the
spectral curve of a 1--gap Lam\'e potential.  As we will see, it is
natural to view the parameter domain of the Euclidean minimal torus as
a 4--fold covering of the spectral curve belonging to the Lam\'e
potential; the four ends of the minimal surface then cover the one end
of the Lam\'e spectral curve.  The spectral curve point of view yields
natural candidates for the spinors fields $s_1$ and $s_2$ needed to
construct the minimal immersion.

\begin{The}\label{the:four-ends}
  The spectral curve of a Euclidean minimal torus with four planar
  ends located at the half periods is the double
  $\Sigma=\Sigma'\, \dot \cup\, \bar \Sigma'$ of the spectral curve $\Sigma'$
  of a 1--gap Lam\'e potential.
\end{The}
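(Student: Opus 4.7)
By Corollary~\ref{cor:main}, the component $\Sigma'$ of the spectral curve is the normalization of the set of multipliers $h\in\Hom(\Gamma,\C_*)$ on $T^2=\C/\Gamma$ admitting a nonzero meromorphic function $\Phi$ with monodromy $h$, simple poles confined to the four half-periods, and vanishing order-zero Laurent terms there. My plan is to push this problem down to a single puncture on a smaller torus via the unramified 4-to-1 covering
\[
\pi\colon T^2\to\tilde T^2:=\C/\tilde\Gamma,\qquad \tilde\Gamma:=\tfrac{1}{2}\Gamma,
\]
under which the four half-periods of $T^2$ are precisely the fibre $\pi^{-1}([0])$. The 1-gap Lam\'e potential $2\wp_{\tilde\Gamma}$ lives on $\tilde T^2$ with its unique pole at $[0]$, so this covering is the geometric source of the 4-to-1 relation between parameter domain and spectral curve announced in the theorem.

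First I decompose the space $V^h$ of meromorphic functions on $T^2$ with monodromy $h$ into joint eigenspaces of the deck transformation group $\tilde\Gamma/\Gamma\cong(\Z/2)^2$. Because each generating translation $\sigma_i\colon z\mapsto z+\omega_i/2$ satisfies $(\sigma_i^*)^2=h(\omega_i)$ on $V^h$, its eigenvalues on $V^h$ are $\pm\sqrt{h(\omega_i)}$, and $V^h$ splits into four eigenspaces $V^h_{\tilde\eta}$ indexed by the four lifts $\tilde\eta\in\Hom(\tilde\Gamma,\C_*)$ of $h$; each $V^h_{\tilde\eta}$ may equivalently be viewed as the space of meromorphic functions on $\tilde T^2$ with monodromy $\tilde\eta$. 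Because the Laurent expansions of an eigenfunction at the four half-periods of $T^2$ agree with that at $[0]$ up to the corresponding character values, invertibility of the $(\Z/2)^2$-character matrix shows that $\Phi\in V^h$ satisfies the conditions of Corollary~\ref{cor:main} at all four half-periods if and only if each eigencomponent $\Phi_{\tilde\eta}$ satisfies the corresponding conditions (simple pole at $[0]$, vanishing constant term) on $\tilde T^2$.

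The problem thus reduces to identifying the set of $\tilde\eta\in\Hom(\tilde\Gamma,\C_*)$ admitting a nonzero meromorphic function on $\tilde T^2$ with monodromy $\tilde\eta$, simple pole at $[0]$, and vanishing constant term with the 1-gap Lam\'e spectral curve. By Riemann--Roch on the elliptic curve $\tilde T^2$, the space of such functions with simple pole at $[0]$ is generically 1-dimensional, so the vanishing-constant-term condition cuts out a 1-dimensional subvariety of the 2-dimensional character variety. I identify this subvariety with the Lam\'e spectral curve by using the classical Baker--Akhiezer function
\[
\psi(z,P)=\frac{\sigma(z-A(P))}{\sigma(z)\,\sigma(A(P))}\,e^{z\,\zeta(A(P))}
\]
of $L=-\partial^2+2\wp_{\tilde\Gamma}$: a direct $\sigma$-function computation shows $\psi(z,P)=-1/z+O(z)$ near $z=0$ (the order-zero term cancels identically), and its monodromy parametrizes the Lam\'e spectral curve as $P$ varies. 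Restricting characters from $\tilde\Gamma$ to $\Gamma$ then sends the Lam\'e spectral curve birationally onto $\Sigma'$ and exhibits $T^2\to\tilde T^2$, identified with the Lam\'e spectral curve, as the announced 4-fold covering.

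The main obstacle is this last identification, which involves two separate checks: that the 1-dimensional locus cut out by the vanishing-constant-term condition has no spurious components beyond the Lam\'e curve, and that the restriction map $\Hom(\tilde\Gamma,\C_*)\to\Hom(\Gamma,\C_*)$ is generically injective on it rather than a multi-sheeted cover (equivalently, that no two points of the Lam\'e curve differ by a $(\pm1,\pm1)$-shift of multipliers). These are verified by combining the explicit $\sigma$-function formula above with the Riemann--Roch dimension count and the classical description of the Lam\'e spectral curve as an elliptic curve isogenous to $\tilde T^2$.
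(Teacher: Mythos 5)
Your proposal is correct in substance and shares the paper's two key ingredients — the 4:1 covering of the parameter torus over the torus carrying the Lam\'e potential, and the Lam\'e Baker--Akhiezer functions, whose Laurent expansion $\frac1x+0+O(x)$ is exactly what makes them admissible in the sense of Corollary~\ref{cor:main} — but it organizes the converse direction differently. The paper only exhibits the pulled-back Baker--Akhiezer functions as a one-parameter family of admissible multipliers and then invokes Corollary~\ref{cor:main} (the admissible set is an irreducible 1--dimensional analytic set whose normalization is a compact Riemann surface with one puncture) to conclude that this family already fills out the whole component; beware that your lattice conventions are flipped relative to the paper, where the parameter torus is $\C/\tilde\Gamma$ with $\tilde\Gamma=\Span_\Z\{4\omega_1,4\omega_3\}$ and the Lam\'e curve lives on $\C/\Gamma$, $\Gamma=\tfrac12\tilde\Gamma$. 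You instead prove the identification directly: the deck-group $(\Z/2)^2$ eigenspace decomposition, together with the invertible character matrix relating the order-zero terms at the four half periods to those of the eigencomponents at $[0]$, reduces the boundary-value problem to the once-punctured quotient torus, where it can be solved completely. The two verifications you flag are genuinely needed but routine: since a degree-one line bundle on an elliptic curve has $h^0=1$, for every nontrivial multiplier $\tilde\eta$ the space of functions with at most a simple pole at $[0]$ is spanned by $\sigma(z-a)\sigma(z)^{-1}e^{cz}$ with $(a,c)$ determined by $\tilde\eta$, and the vanishing of the constant term forces $c=\zeta(a)$, i.e.\ exactly the Lam\'e Baker--Akhiezer functions, so there are no spurious components; and generic injectivity of the multiplier map (hence of the restriction of characters) follows from the Legendre relation, which shows that equality of multipliers forces the parameter difference to lie in a discrete set, with only lattice translates giving identities in $\alpha$. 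Your route costs these extra checks but buys independence from the irreducibility statement for the converse inclusion; note that the paper's shortcut also silently uses the generic injectivity you make explicit (otherwise the normalization could a priori be a curve covered by, rather than equal to, the Lam\'e curve), so flagging it is a point in your favor.
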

\begin{proof}
  By Corollary~\ref{cor:main}, one can compute one component $\Sigma'$
  of the spectral curve $\Sigma=\Sigma' \, \dot \cup\, \bar \Sigma'$ by
  normalizing the 1--dimensional set of multipliers $h\in
  \Hom(\Gamma,\C_*)$ for which there exists a meromorphic function
  $\Phi$ with monodromy $h$, at most first order poles at the ends,
  and vanishing order zero term in the Laurent expansion at each
  end. The normalization of the set of possible $h$'s is an
  irreducible compact Riemann surface with one puncture.

  Denote by $T^2=\C/{\tilde \Gamma}$ with
  $\tilde\Gamma=\Span_\Z\{4\omega_1,4\omega_3\}$ a uniformization of
  the torus. Then $\Sigma'= (\C /\Gamma)\backslash \{0\}$ with $\Gamma
  = \frac12 \tilde \Gamma=\Span_\Z\{2\omega_1,2\omega_3\}$ is in a
  natural way a component of the spectral curve: for every $\alpha\in
  \Sigma'$, the 1--gap Lam\'e Baker--Akhiezer function $\Phi_\alpha$
  of $\C /\Gamma$, when seen as a function on the 4--fold covering
  $T^2=\C/\tilde \Gamma$ of $\C/\Gamma$, has exactly the right kind of
  Laurent expansion at the ends (see
  Appendix~\ref{sec:Krichever-BA}). The monodromy of $\Phi_\alpha$ on
  $\C/\Gamma$ in the direction $\gamma=2\omega_j$ is
  $e^{2(\zeta(\alpha)\omega_j - \alpha\eta_j)}$ so that its monodromy
  on the 4--fold covering $T^2$ in the direction $\tilde
  \gamma=4\omega_j$ is $h^{\tilde \gamma} = e^{4(\zeta(\alpha)\omega_j
    - \alpha\eta_j)}$. Thus, $\Sigma'$ is indeed a Riemann surface
  with one end that parametrizes a subset of the monodromies which, as
  in Corollary~\ref{cor:main}, are possible for meromorphic functions
  on $T^2$ with first order poles and vanishing order zero terms at the
  half periods.
\end{proof}

In the following we explain how to reconstruct all Euclidean minimal
tori with four planar ends at the half periods of $T^2=\C/{\tilde
  \Gamma}$ and spin structure corresponding to a given
$\Z_2$--multiplier $h_0\in \Hom(\tilde \Gamma,\Z_2)$. For this one has
to understand how to solve
\begin{itemize}
\item[a)] the \emph{algebraic problem} of finding a two linearly
  independent meromorphic functions $s_1$, $s_2$ with monodromy $h_0$
  as in Corollary~\ref{cor:spinors} (i.e., with poles of order at most
  one at the half periods, with vanishing order zero terms in the
  Laurent expansions at the half periods, and without common zeroes),
\item[b)] and the \emph{period problem} \eqref{eq:period_condition}.
\end{itemize}

One can check\footnote{In fact, for given $h_0\not \equiv 1$ the
  4--dimensional space of meromorphic functions on $T^2/\tilde \Gamma$
  with monodromy $h_0$ and first order poles at the half periods is
  spanned by translates of Baker functions $\Phi_\alpha$ on
  $T^2/\tilde \Gamma$, cf.\ Appendix~\ref{sec:Krichever-BA}, for
  $\alpha$ equal to one of the $\omega_i$, $i=1$,...,$3$ depending on
  $h_0$; but no linear combination of these translates has vanishing
  order zero terms in the Laurent expansion at all four half periods.}
that the algebraic problem cannot be solved for non--trivial spin
structure $h_0\not \equiv 1$.  In the case of trivial spin structure
$h_0\equiv 1$, there is a 3--dimensional space of meromorphic
functions on $T^2=\C/\tilde \Gamma$ whose only poles are first order
poles at the half periods and whose Laurent expansions at the half
periods have vanishing order zero terms. It is spanned by the
Baker--Akhiezer functions $\Phi_1=\Phi_{\alpha=\omega_1}$,
$\Phi_2=\Phi_{\alpha=\omega_2}$ and $\Phi_3=\Phi_{\alpha=\omega_3}$ on
$\C/\Gamma$ (see Appendix~\ref{sec:Krichever-BA}) viewed as functions
on the 4--fold covering $T^2=\C/\tilde\Gamma$, for $\Gamma =
\Span_\Z\{2\omega_1,2\omega_3\}$ and $\tilde \Gamma =
\Span_\Z\{4\omega_1,4\omega_3\}$.

Because the natural $\R_+\SU(2)$--action on $s_1$, $s_2$ changes the
immersion by a homothety only, the general ansatz for $s_1$ and $s_2$
of Euclidean minimal tori with four ends at the half periods and
trivial spin structure is
\begin{align}
  \label{eq:ansatz-four-end}
  s_1 & = \Phi_1 + a\, \Phi_2 + b\, \Phi_3 \\
  s_2 & = c\, \Phi_2 + d\, \Phi_3
\end{align}
(that $\Phi_1$ has a non--trivial coefficient can be achieved by
renumbering the basis of $\Gamma$ if necessary).

For the computation of the periods we use that
\begin{equation}
  \label{eq:squares-of-ba}
  \begin{split}
    \Phi_k(x)\Phi_l(x) & = \tilde \wp(x) \pm \tilde \wp(x-2\omega_1)\pm \tilde \wp(x-2 \omega_2) \pm \tilde \wp(x- 2\omega_3)    \qquad k\neq l \\
    \Phi_k^2(x)  & = \tilde \wp(x) + \tilde
    \wp(x-2\omega_1)+ \tilde \wp(x-2 \omega_2) + \tilde \wp(x-
    2\omega_3) - \wp(\omega_k),
  \end{split}
\end{equation}
where $\wp$ and $\tilde \wp$ denote the Weierstrass $\wp$--functions
of $\C/\Gamma$ and $\C/\tilde \Gamma$, respectively, and where in the
first equation always two of the $\pm$ are $-$--signs (e.g.\ for
$\Phi_1\Phi_2$ we have -,-,+).  As a consequence $\Phi_k\Phi_l\, dx$
with $k\neq l$ has no periods at all and
\begin{equation}
  \label{eq:period-four-point}
  \int_{\gamma=4\omega_l}\Phi_k^2\, dx = - 4( \eta_l + \wp(\omega_k) \omega_l)
  = - 4( \eta_l + e_k \omega_l). 
\end{equation}

Evaluation \eqref{eq:period_condition} along $\gamma=4\omega_1$ and
$\gamma=4\omega_3$ yields
\begin{equation}\label{ed:closedness1}
  \begin{split}
    \Re\big( ac(\eta_1 +e_2\omega_1) + bd (\eta_1 + e_3 \omega_1)\big)=0\\
    \Re\big( ac(\eta_3 +e_2\omega_3) + bd (\eta_3 + e_3
    \omega_3)\big)=0    
  \end{split}
\end{equation}
and
\begin{equation}\label{ed:closedness2}
  \begin{split}
    \overline{(\eta_1 +e_1\omega_1) + a^2(\eta_1 + e_2 \omega_1)+ b^2(\eta_1 +e_3\omega_1)}  = c^2(\eta_1 +e_2\omega_1) + d^2(\eta_1 + e_3 \omega_1) \\
    \overline{(\eta_3 +e_1\omega_3) + a^2(\eta_3 + e_2 \omega_3)+
      b^2(\eta_3 +e_3\omega_3)} = c^2(\eta_3 +e_2\omega_3) +
    d^2(\eta_3 + e_3 \omega_3).
  \end{split}
\end{equation}
A dimension count suggests that there is 2 real parameter space of
solutions.

Given $a$, $b$, $c$, $d$ such that \eqref{ed:closedness1} and
\eqref{ed:closedness2} are solved, one still has to check that $s_1$
and $s_2$ have no common zeros (so that one obtains an immersion).
The minimal torus with four planar ends is then given by plugging
$s_1$ and $s_2$ into \eqref{eq:classical_weierstrass} and
integrating. Because by \eqref{eq:squares-of-ba} one only has to
integrate $\wp$--functions on the parameter torus $T^2$, this yields
an explicit formula of the minimal surface in terms of Weierstrass
$\zeta$--functions on $T^2$.

\emph{Example:} setting $a=b=0$, \eqref{ed:closedness1} is solved
automatically and by matrix inversion \eqref{ed:closedness2} gives $c$
and $d$ uniquely up to sign, because
\begin{equation}
   \det  \begin{pmatrix}
    \eta_1 +e_2\omega_1 & \eta_1 + e_3 \omega_1 \\
    \eta_3 +e_2\omega_3 & \eta_3
  + e_3 \omega_3
\end{pmatrix}= (e_3-e_2) \det\begin{pmatrix}
  \eta_1 & \omega_1 \\
  \eta_3 & \omega_3
  \end{pmatrix} = (e_3-e_2) \frac{\pi i}2.
\end{equation}
The fact that the spinors $s_1$, $s_2$ generically have no common zeros
is checked in Section 23 of \cite{KS}. 

Examples of Euclidean minimal tori with more planar ends are
constructed in \cite{Sh}. It would be interesting to generalize our
construction of tori with four ends to an explicit construction, based
on elliptic soliton theory, of all Euclidean minimal tori with planar
ends.

Apart from our construction of minimal tori with four planar ends, in
minimal surface theory Baker--Akhiezer functions previously appeared
in Bobenko's paper \cite{Bob} on helicoids with handles.  In both
cases the spinors describing the minimal surface are Baker--Akhiezer
functions, so that the Euclidean minimal surfaces in question are
parametrized by (coverings of) their own spectral curves. This
relation between Euclidean minimal surfaces and integrable systems is
fundamentally different from the well established theory
\cite{PS89,Hi,Bob91} of constant mean curvature tori in $\R^3$ with
$H\neq 0$ (where the immersion is parametrized by a suitable real
torus in the Jacobian of the spectral curve).

\section{Surfaces with translational periods and Riemann's minimal
  surfaces}\label{sec:riemann}

Euclidean minimal tori with planar ends and translational periods can
be treated along the same lines as closed minimal tori with planar
ends. In the following we investigate the simplest
non--trivial\footnote{The trivial example here being the plane which
  can be viewed as a minimal torus with two translational periods and
  no ends on its fundamental domain.}  examples which are tori with
one translational period and two planar ends on each fundamental
domain. The embedded examples among these surfaces are Riemann's
``staircase'' minimal surfaces, see \cite{LRW}.  Like in
Section~\ref{sec:four_ends}, the spectral curves of Euclidean minimal
tori with translational periods and two planar ends are doubles of
1--gap Lam\'e spectral curves.

For a Euclidean minimal torus with planar ends and translational
periods, both the M\"obius invariant quaternionic holomorphic line
bundle $V/L$ (Appendix~\ref{sec:moebius-inv}) and the Euclidean
holomorphic line bundles $L$ and $KL^{-1}$
(Appendix~\ref{sec:weierstrass}) are well defined. The main difference
to the case with trivial translational monodromy is that the flat
bundle $V$ is no longer trivial. Moreover, the holomorphic section
$\psi_2$ is not periodic (instead $\psi_1$ and $\psi_2$ span a
two--dimensional linear system with monodromy; the monodromies are
$2\times2$ Jordan blocks with $1$ on the diagonal and translational
periods appearing in the upper right corners). The holomorphic section
$\psi_1$ of $V/L$ that corresponds to $\infty$ is still periodic,
because $\infty$ is fixed under translations. Also, the holomorphic
section $\psi$ appearing in the Weierstrass representation is well
defined.  In particular, the functions $s_1$, $s_2$ are well defined
meromorphic functions with $\Z_2$--monodromy and pole and zero
behavior as in Corollary~\ref{cor:spinors}. Only the periodicity
condition \eqref{eq:period_condition} is not satisfied anymore.

The spectral curve of a minimal torus with planar ends and
translational monodromy can still be defined as the spectral curve of
the quaternionic holomorphic line bundle $V/L$. In particular, the
proof of Theorem~\ref{the:main} goes through without change (the
non--periodic holomorphic section $\psi_2$ appears in the proof only
in local considerations about the pole behavior of $\Phi_1$ and
$\Phi_2$ and the smoothness of $\psi^h=\psi_1\chi$).

\begin{Lem}\label{lem:two-point}
  A Euclidean minimal torus with two planar ends and translational
  periods has non--trivial spin structure and ends at two of the four
  half periods of the torus.
\end{Lem}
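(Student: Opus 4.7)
The plan is to split the statement into its two claims---non-triviality of the spin structure, and location of the ends at half-periods---and address each by a dimension count combined with Riemann--Roch and Riemann--Hurwitz.

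For the first claim I would argue by contradiction. Assuming $h_0\equiv 1$, the spinors $s_1,s_2$ of Corollary~\ref{cor:spinors} become single-valued elliptic functions on $T^2$ with at most simple poles at $p_1,p_2$ and vanishing zero-order Laurent coefficients there. Riemann--Roch gives $\dim H^0(\mathcal{O}(p_1+p_2))=2$, with a natural basis $\{1,\,g\}$ where $g(z):=\zeta(z-p_1)-\zeta(z-p_2)$. A short calculation using the oddness of the Weierstrass $\zeta$-function shows that the two zero-order evaluation functionals at $p_1$ and $p_2$ coincide on $\Span\{1,g\}$, so the admissible subspace is at most one-dimensional. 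But the Weierstrass formula~\eqref{eq:classical_weierstrass} produces an immersion only when $s_1$ and $s_2$ are linearly independent (otherwise $\partial f$ vanishes on the zero locus of $s_2$, which is non-empty for any non-constant elliptic function with two simple poles), which is the required contradiction.

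For the second claim I would let $L=L_{h_0}$ denote the non-trivial order-two spin line bundle associated with $h_0$. Riemann--Roch gives $\dim H^0(L(p_1+p_2))=2$, and the existence of two linearly independent admissible spinors $s_1,s_2$ forces the two zero-order vanishing conditions to cut out the entire space, so $s_1,s_2$ span $H^0(L(p_1+p_2))$. Since they have no common zeros, the ratio $F:=[s_1:s_2]\colon T^2\to\CP^1$ is a holomorphic map of degree $\deg L(p_1+p_2)=2$. Examining the Laurent expansions at $p_i$---each of $s_1,s_2$ is either $a/z+0+O(z)$ with $a\neq 0$ or $0+0+O(z)$---one verifies that $F$, or $1/F$ if $F(p_i)=\infty$, has a critical point of exact order two at $p_i$, so $p_i$ is a branch point of $F$.

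Finally, Riemann--Hurwitz implies $F$ has exactly four branch points, which are the fixed points of the associated hyperelliptic involution $\sigma\colon z\mapsto -z+c$ on $T^2$. Since $\sigma$ fixes both $p_1$ and $p_2$, we have $2p_1\equiv 2p_2\equiv c\pmod\Gamma$, so $p_1-p_2$ is already a half-period of $\Gamma$; translating the origin of $\C/\Gamma$ so that $\sigma$ becomes $z\mapsto -z$ makes the four branch points equal to the four half-periods $\{0,\omega_1,\omega_2,\omega_3\}$ of $\Gamma$, and $p_1,p_2$ become two of them. The main obstacle I expect is the ramification-index bookkeeping at $p_i$ in the mixed case where only one of $s_1,s_2$ has a pole at $p_i$: one has to confirm that the vanishing zero-order condition on the regular spinor (which must then vanish to first order at $p_i$) upgrades the ramification index to exactly $2$ rather than a higher value.
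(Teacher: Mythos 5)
Your proposal is correct, and it splits into one half that matches the paper and one that does not. The non--triviality of the spin structure is argued exactly as in the paper: there the general admissible function with trivial monodromy is written as $a(\zeta(x)-\zeta(x-p))+b$, and oddness of $\zeta$ makes the two order--zero conditions collapse to the single equation $a\zeta(p)+b=0$, which is precisely your computation in the basis $\{1,g\}$ of $H^0(\mathcal{O}(p_1+p_2))$. The localization of the ends, however, is done differently. The paper works with the explicit basis $\Phi_{\alpha=\omega_2}(x)$, $\Phi_{\alpha=\omega_2}(x-p)$ (Appendix~\ref{sec:Krichever-BA}) of the two--dimensional space of functions with monodromy $h_0$ and simple poles at $0$ and $p$, and reads off from the zero of the Baker--Akhiezer function that the order--zero conditions annihilate the whole space only when $p=\omega_2$. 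You instead identify the admissible space with $H^0\bigl(L_{h_0}(p_1+p_2)\bigr)$, use absence of common zeros to get a base--point--free degree--two map $F=[s_1:s_2]\colon T^2\to\CP^1$ ramified at both ends, and conclude via Riemann--Hurwitz and the deck involution $z\mapsto -z+c$ that $p_1-p_2$ is a half period; this is a sound, coordinate--free alternative, and the worry you raise at the end is vacuous, since a degree--two map has no ramification index above two (in the mixed case the pole--free spinor, viewed as a section of $L_{h_0}(p_1+p_2)$, vanishes to order exactly two at the end for degree reasons, so $F$ has a double pole there). What the paper's explicit route buys in addition is the refinement actually used later in Section~\ref{sec:riemann}: with the normalization $h_0(2\omega_2)=1$, $h_0(2\omega_3)=-1$, the ends sit at $0$ and $\omega_2$, i.e.\ the half period separating the ends is the one on which $h_0$ is $+1$, which feeds directly into the ansatz for $s_1$, $s_2$ and the proof of Theorem~\ref{the:two-ends}; your argument proves the lemma as stated but would need a small supplement (tracking how the involution interacts with the multiplier $h_0$) to recover that compatibility.
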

\begin{proof}
  Let $T^2=\C/\Gamma$ with $\Gamma=\Span_\Z\{2\omega_1, 2\omega_3\}$ and
  assume that the ends are located at $0$ and $p\in T^2$.  In order to
  see that the spin structure is not trivial, note that a meromorphic
  function with trivial monodromy and first order poles at $0$ and
  $p\in T^2$ is of the form \[x\in T^2\mapsto a(\zeta(x)-\zeta(x-p)) +
  b\] with $a$, $b\in \C$. The condition that the order zero term in
  the Laurent expansion at both ends vanishes is $a \zeta(p) + b =
  0$. It is thus impossible to find two linearly independent functions
  $s_1$ and $s_2$ as in Corollary~\ref{cor:spinors} with trivial
  monodromy.

  Therefore, the spin structure has to be non--trivial and we can
  assume that the corresponding $\Z_2$--multiplier $h_0$ satisfies
  $h_0(2\omega_2) = 1$ and $h_0(2\omega_3)=-1$. The two--dimensional
  space of meromorphic functions with monodromy $h_0$ and first order
  poles at $0$ and $p$ is then spanned by the Baker functions $x\mapsto
  \Phi_{\alpha=\omega_2}(x)$ and $x\mapsto
  \Phi_{\alpha=\omega_2}(x-p)$ on $\C/\Gamma$ (see
  Appendix~\ref{sec:Krichever-BA}). The condition of
  Corollary~\ref{cor:spinors} that the order zero terms in the Laurent
  expansion at the ends vanishes for both sections is
  $\Phi_{\alpha=\omega_2}(p)=\Phi_{\alpha=\omega_2}(-p)=0$. But this
  only holds if $p=\omega_2\in T^2$.
\end{proof}

The proof of the following theorem is analogous to that of
Theorem~\ref{the:four-ends}. The fundamental domain of the torus with
two ends is now a double covering of the Lam\'e spectral curve; the
two ends cover the one end of the spectral curve.

\begin{The}\label{the:two-ends}
  The spectral curve of a Euclidean minimal torus with two planar ends
  and translational periods is the double $\Sigma=\Sigma'\,
  \dot\cup\,\bar \Sigma'$ of a 1--gap Lam\'e spectral curve $\Sigma'$.
\end{The}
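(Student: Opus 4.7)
The plan is to follow the template of Theorem~\ref{the:four-ends}, replacing the 4--fold cover used there by an unramified double cover of elliptic curves. By Lemma~\ref{lem:two-point} we may assume $T^2=\C/\Gamma$ with $\Gamma=\Span_\Z\{2\omega_1,2\omega_3\}$, ends at $0$ and $\omega_2$, and spin $\Z_2$--multiplier $h_0$ as specified there. Corollary~\ref{cor:main} then identifies one connected component $\Sigma'$ of $\Sigma=\Sigma'\,\dot\cup\,\bar\Sigma'$ with the normalization of the set of $h\in\Hom(\Gamma,\C_*)$ for which there exists a meromorphic function $\Phi$ on $T^2$ with monodromy $h\cdot h_0$, at most first--order poles located at $\{0,\omega_2\}$, and vanishing constant term in the Laurent expansion at each of these points.

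The key geometric step is to introduce the superlattice $\Gamma':=\Gamma+\Z\omega_2$ of index two in $\Gamma$, giving an unramified double cover $\pi\colon\C/\Gamma\to\C/\Gamma'$ which identifies the two ends $0$ and $\omega_2$ with the single point $0\in\C/\Gamma'$. I would then take the 1--gap Lam\'e spectral curve to be $\Sigma'_L:=(\C/\Gamma')\setminus\{0\}$, with its Baker--Akhiezer functions $\Phi_\alpha$ on $\C/\Gamma'$, $\alpha\in\Sigma'_L$, from Appendix~\ref{sec:Krichever-BA}. By the 1--gap property, each $\Phi_\alpha$ is meromorphic on $\C/\Gamma'$ with a single first--order pole at $0$, vanishing constant term there, and monodromy $h_\alpha(\gamma')=e^{\zeta'(\alpha)\gamma'-\alpha\eta'(\gamma')}$ in direction $\gamma'\in\Gamma'$, where $\zeta'$ and $\eta'$ refer to $\Gamma'$.

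Pulling $\Phi_\alpha$ back through $\pi$ yields a meromorphic function on $T^2$ with simple poles precisely at $0$ and $\omega_2$, vanishing constant terms at both, and monodromy on $\Gamma\subset\Gamma'$ equal to the restriction $h_\alpha|_\Gamma$. The assignment $\alpha\mapsto h_\alpha|_\Gamma\cdot h_0$ is then a holomorphic, non--constant map of $\Sigma'_L$ into the multiplier set of Corollary~\ref{cor:main}. Since both sides are one--dimensional complex analytic sets whose normalizations are compact Riemann surfaces with one puncture, this map identifies $\Sigma'_L$ with $\Sigma'$ after normalization, and Theorem~\ref{the:main} then yields $\Sigma=\Sigma'_L\,\dot\cup\,\bar\Sigma'_L$.

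The main subtlety to verify is that the single Laurent vanishing condition at $0\in\C/\Gamma'$ really does transfer to vanishing at \emph{both} ends on $T^2$; this is in fact automatic because $\pi$ is a local biholomorphism, so the Laurent expansion of $\Phi_\alpha\circ\pi$ at each of $0$ and $\omega_2$ coincides with the Laurent expansion of $\Phi_\alpha$ at $0\in\C/\Gamma'$. Surjectivity of $\alpha\mapsto h_\alpha|_\Gamma\cdot h_0$ follows as in the proof of Theorem~\ref{the:four-ends} by comparing dimensions, the image being a one--dimensional analytic subset of the irreducible one--dimensional analytic set produced by Corollary~\ref{cor:main}.
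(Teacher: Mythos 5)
Your proposal is correct and follows essentially the same route as the paper: your superlattice $\Gamma'=\Gamma+\Z\omega_2=\Span_\Z\{\omega_2,2\omega_3\}$ is exactly the lattice used there, and the paper likewise pulls back the 1--gap Lam\'e Baker--Akhiezer functions $\Phi_\alpha$ on $\C/\Gamma'$ through the unramified double cover $\C/\Gamma\to\C/\Gamma'$ (which sends both ends $0$ and $\omega_2$ to the single puncture) and invokes Corollary~\ref{cor:main} to identify $(\C/\Gamma')\setminus\{0\}$ with one component of the spectral curve. Your explicit remarks on the transfer of the vanishing order--zero condition and on the $h_0$--twist are harmless elaborations of the same argument.
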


\begin{proof}
  As in Theorem~\ref{the:four-ends}, we use Corollary~\ref{cor:main}
  to compute one component of the spectral curve.  Denote by
  $T^2=\C/\Gamma$ with $\Gamma=\Span_\Z\{2\omega_1,2\omega_3\}$ a
  uniformization of the torus such that the ends are located at $0$
  and $\omega_2=\omega_1+\omega_3$. Then $\Sigma'= (\C
  /\Gamma')\backslash \{0\}$ with $\Gamma'
  =\Span_\Z\{\omega_2,2\omega_3\}$ is one component of the spectral
  curve, because for every $\alpha\in \Sigma'$ the Baker--Akhiezer
  function $\Phi_\alpha$ on $\C /\Gamma'$ (see
  Appendix~\ref{sec:Krichever-BA}), when seen as a function on the
  2--fold covering $T^2=\C/\Gamma$ of $\C/\Gamma'$, has first order
  poles and Laurent expansions with vanishing order zero terms at the
  ends.
\end{proof}

In the rest of the section we sketch how to recover the classification
\cite{LRW} of Euclidean minimal tori with two parallel planar ends and
one translational period.  In particular, we explain how Riemann's
``staircase'' minimal surfaces appear in our setting.  The general
ansatz for spinors $s_1$ and $s_2$ belonging to minimal tori with two
planar ends that are parallel to the $jk$--plane and located at the
points $0$ and $\omega_2$ of $T^2=\C/\Gamma$ with
$\Gamma=\Span_\Z\{2\omega_1,2\omega_3\}$ is (because as in the proof
of Lemma~\ref{lem:two-point} the monodromy of $s_1$ and $s_2$ has to
be $h_0$ with $h_0(2\omega_2)=1$ and $h_0(2\omega_3)=-1$)
\[ s_1 = \Phi_1 + \Phi_2 \qquad \textrm{ and } \qquad s_2 = a(\Phi_1 -
\Phi_2) \] with $a\in \C$, where $\Phi_1$, $\Phi_2$ denote the Baker
functions $\Phi_{\alpha=\omega_2/2}$ and
$\Phi_{\alpha=\omega_2/2+\omega_3}$ on $\C/\Gamma'$ (see Appendix~\ref{sec:Krichever-BA}) with $\Gamma'
=\Span_\Z\{\omega_2,2\omega_3\}$ seen as functions on the double
covering $T^2=\C/\Gamma$.  The spinors $s_1$, $s_2$ have no common
zeros.  For integrating \eqref{eq:classical_weierstrass} and computing
the periods we use that (by comparing poles and zeros)
\begin{equation}
  \label{eq:squares-of-b2}
  \begin{split}
    \Phi_1(x)\Phi_2(x) & = \wp(x) - 
    \wp(x-\omega_2)    \\
    \Phi_j^2(x)   &= \wp(x) + \wp(x-\omega_2) - b_j,\qquad j=1,2
  \end{split}
\end{equation}
with $b_1=2\wp(\omega_2/2)$ and $b_2=2\wp(\omega_2/2+\omega_3)$. Thus,
as in Section~\ref{sec:four_ends}, \eqref{eq:classical_weierstrass}
can be explicitly integrated in terms of $\zeta$--functions.  Because
$\Phi_1 \Phi_2dx$ has no periods, the closedness
\eqref{eq:period_condition} of the minimal immersion described by
$s_1$ and $s_2$ in the direction $\gamma\in \Gamma$ reads
\[ a \int_\gamma (\Phi_1^2(x)-\Phi_2^2(x))dx \in i\R \quad \textrm{
  and } \quad \int_\gamma (\Phi_1^2(x)+\Phi_2^2(x))dx = {\bar a}^2
\overline{\int_\gamma (\Phi_1^2(x)+\Phi_2^2(x))dx}.\] The
period in the direction $\gamma_{m,n}=2(m\omega_1+n\omega_3)$,
$m,n\in \Z$ vanishes if
\begin{equation}
  \label{eq:closed-2end-1}
  2a (b_2-b_1) (m\omega_1+n\omega_3) \in i\R
\end{equation}
with
\begin{equation}
  \label{eq:closed-2end-2}
  \bar a= \pm \arg\big( 8(m\eta_1+n\eta_3) - 2 (b_1+b_2)
(m\omega_1+n\omega_3)\big)
\end{equation}
for $\arg(z)=z/|z|$.  This suggests that for given $m$, $n$ there is a
1--parameter family of points in the moduli space of genus one curves
for which the $\gamma_{m,n}$--period is closed, i.e.,
\eqref{eq:closed-2end-1} is satisfied for $a$ prescribed via
\eqref{eq:closed-2end-2} by $m$, $n$. A more detailed investigation of
this would recover the classification given in Theorem~3.2 of
\cite{LRW}.

The embedded examples in the moduli space of all Euclidean minimal
tori with two parallel planar ends are Riemann's ``staircase'' minimal
surfaces (see Theorem~3.1 of \cite{LRW}). They are parametrized over
rectangular tori and appear in our setting for $\omega_1\in \R$ and
$\omega_3\in i\R$. Because this implies $b_2=\bar b_1$, for
$\gamma_{m,n}$ with $mn=0$ and the corresponding choice
\eqref{eq:closed-2end-2} of $a$, equation \eqref{eq:closed-2end-1} is
then automatically satisfied. The parametrizations of Riemann's
minimal surfaces thus obtained essentially coincide with the ones
given in \cite{HKR}.

Riemann's minimal surfaces have previously been characterized using
algebro geometric KdV theory in the work of Meeks, Perez, and Ros
\cite{MPR,MP}. It would be interesting to conceptually relate the way
KdV theory appears in their work with our approach based on spectral
curve theory.

\appendix

\section{Some notions of quaternionic holomorphic
  geometry}\label{app:quat_hol}

We review the relevant notions of quaternionic holomorphic geometry
\cite{PP,FLPP01,BFLPP02}. In particular we discuss the quaternionic
holomorphic approach \cite{BPP09,BLPP} to the spectral curve of conformally
immersed tori \cite{Ta98,GS98,Ta04,Ta06}.

\subsection{Quaternionic holomorphic line bundles}\label{app:linebundles}
A \emph{quaternionic holomorphic line bundle} over a Riemann surface
$M$ is a quaternionic line bundle $L$ equipped with a complex
structure $J\in \Gamma(\End L)$, $J^2=-\Id$ and a quaternionic linear
(Dirac type) differential operator $D\colon \Gamma(L) \to \Gamma(\bar
KL)$ satisfying the Leibniz rule
\begin{equation*}
  D(\psi \lambda) = (D \psi)\lambda + (\psi d\lambda)''
\end{equation*}
for all $\psi \in \Gamma(L)$ and $\lambda\colon M \to \H$, where
$\omega'':=\frac12(\omega + J{*}\omega)$ and $\bar K L$ denotes the
bundle of 1--forms with values in $L$ that transform like $*\omega =
-J \omega$ for $*$ the induced complex structure on $T^*M$. The degree
of a quaternionic holomorphic line bundle is defined as the degree of
the underlying complex line bundle $E := \{ \psi \in L \mid J\psi
=\psi \ii\}$.  The complex structure $J$ decomposes the operator
$D=\dbar+Q$ into $J$--commuting part $\dbar$ and anti--commuting part~$Q$.  The operator $\dbar$ respects the complex line bundle $E$ and
defines a complex holomorphic structure.  The tensor field $Q$ is
called the \emph{Hopf field} of the quaternionic holomorphic line
bundle.

There are two essentially different ways how quaternionic holomorphic
line bundles arise in the theory of conformal immersions of Riemann
surfaces into 4--space. One of them is M\"obius invariant and can be
best understood within the quaternionic model of the conformal
4--sphere $S^4=\HP^1$; the other depends on the choice of a Euclidean
subgeometry or, more precisely, on the choice of a point at infinity
$\infty \in S^4=\HP^1$.

By trivialising the bundle with a $\dbar$--holomorphic section one can
bring the operator $D$ to the form of a Dirac operator
\eqref{eq:dirac-op} acting on functions. When dealing with
quaternionic holomorphic line bundles related to immersed surfaces in
3--space, such a trivialization can always be achieved globally if one
uses sections and functions with $\Z_2$--monodromy which takes into
account the spin structure of the immersion.

\subsection{M\"obius invariant representation}~\label{sec:moebius-inv}
In the following we identify maps $f\colon M \rightarrow S^4= \HP^1$
into the conformal 4--sphere with quaternionic line subbundles
$L\subset V$ of a trivial quaternionic rank 2 bundle $V$ over $M$
equipped with a trivial connection.\footnote{Here $V$ can be simply
  thought of as the Cartesian product of $M$ with a quaternionic rank
  2 vector space. The reason for preferring the language of bundles
  and connections is that is simplifies the treatment of immersions
  with M\"obius monodromy, e.g.\ in Section~\ref{sec:riemann} where we
  discuss minimal tori with translational periods.}  If $f$ is a
conformal immersion, the quaternionic line bundle $V/L$ carries a
unique quaternionic holomorphic structure for which all sections
obtained by projection from constant sections of $V$ are holomorphic
(see the rest of the section for a more explicit coordinate
description and e.g.~\cite{BLPP} for more details). This holomorphic
structure on $V/L$ is M\"obius invariant, because its definition is
projectively invariant.

The conformal immersion is now encoded in the 2--dimensional linear
system of holomorphic sections of $V/L$ obtained by projection from
constant sections of $V$. The quotient of any two linearly independent
sections $\psi_1$, $\psi_2$ in this linear system is a coordinate
representation of the immersion in an affine chart of $\HP^1$. To see
this note that the choice of $\psi_1$, $\psi_2$ identifies $V$ with
the trivial $\H^2$ vector bundle with trivial connection. The
holomorphic sections $\psi_1$, $\psi_2$ of $V/L$ are then the
projections under $\pi \colon V\rightarrow V/L$ of the basis vectors
$e_1=\left(
\begin{smallmatrix}
  1\\0
\end{smallmatrix}\right)$ and $e_2=\left(
\begin{smallmatrix}
  0\\1
\end{smallmatrix}\right)$.
Away from the isolated points at which $f$ goes through 
$\infty=\left(
\begin{smallmatrix}
  1\\0
\end{smallmatrix}\right)\H$, the line bundle corresponding to the immersion can
be written as $L=\left(
\begin{smallmatrix}
  f\\1
\end{smallmatrix}\right)\H$, where $f\colon M \rightarrow
\H=\HP^1\backslash\{\infty\}$ is the representation of the immersion in the
affine chart defined by $\infty$. In particular, the immersions has the
quotient representation
\[ \psi_2= -\psi_1 f.\] Note that as long as the immersion does not go
through $\infty$, the section $\psi_1$ has no zeroes and the affine
representation $f$ is a globally smooth map $f\colon M \rightarrow
\H$. An arbitrary holomorphic section of $V/L$ then takes the from
$\psi_1 g$ for $g\colon M \rightarrow \H$ a function satisfying
$*dg=N\, dg$, where $N\colon M \rightarrow S^2\subset \Im(\H)$ is the
so called left normal of $f$ defined by $*df=N\, df$. In the case that
$f$ takes values in $\R^3=\Im(\H)$, the map $N$ is the Gauss map of
the immersion.

Given an immersion into the conformal 3-- or 4--sphere, a generic choice of
a point $\infty$ at infinity will avoid that the immersion goes through
$\infty$.  However, it might be preferable (for example in
Section~\ref{sec:spec_minimal}) to chose a point $\infty$ at infinity for
which the immersion does go through $\infty$.  In this case, the affine
representation $f$ is smooth away from the \emph{ends} at which it goes
through $\infty$. At the ends the quaternion valued function $f$ then has
first order poles in the sense that $f^{-1}$ vanishes to first order: it
vanishes, because $f$ has an end, and its vanishing order is one, because $f$
is the affine representation of an immersion into the conformal 4--sphere.

\subsection{Euclidean Weierstrass representation}\label{sec:weierstrass}
Give a conformal immersion $f\colon M \rightarrow \R^4=\H$ of a Riemann
surface into Euclidean 4--space, there are unique (up to isomorphism)
quaternionic holomorphic line bundles $L$ and $\tilde L$ with holomorphic
sections $\psi$ and $\alpha$ and a unique pairing \cite[Section 2.3]{FLPP01}
between $\tilde L$ and $L$, i.e., a quaternionic sesquilinear map $(,)\colon
\tilde L \times L \rightarrow T^*M \otimes \H$, such that
\[ (\alpha,\psi) = df.\] This is the quaternionic version \cite{PP} of
the Weierstrass representation of $f$ and the bundle $\tilde L$ is
isomorphic to $KL^{-1}$. Although the quaternionic holomorphic
structures on the line bundles $L$ and $KL^{-1}$ are not M\"obius
invariant, the underlying paired quaternionic line bundles and their
complex holomorphic structures $\dbar$ are M\"obius invariant.

If $f$ takes values in $\R^3=\Im(\H)$, then $\psi\mapsto \alpha$
defines an isomorphism between $L$ and~$\tilde L$.  The bundle $L$ is
then called a \emph{quaternionic spin bundle}, the quaternionic
holomorphic structure is compatible with the pairing in the sense that
holomorphic sections square to closed forms, and $f$ has Weierstrass
representation $df=(\psi,\psi)$. In particular, the underlying complex
bundle $E$ with $\dbar$ is then a complex holomorphic spin bundle (a
square root of the canonical bundle), because if $\varphi$ is a
$\dbar$--holomorphic section of $E$, the pairing $(\varphi,\varphi)$
is $j$ times a holomorphic differential.

We explain now, how the Weierstrass representation fits into the
quaternionic projective picture explained above. First note that the
projective differential $\delta:=\pi \nabla_{|L}$ of a conformal
immersion $f\colon M \rightarrow S^4= \HP^1$ is a bundle isomorphism
$\delta \colon L \rightarrow KV/L$. This allows to define a M\"obius
invariant pairing \cite[Section 2.3]{FLPP01} between $L$ and the
bundle $L^\perp \subset V^*$ perpendicular to $L$ by setting
\[ (\alpha,\psi) := \, < \alpha, \delta \psi > = \,- < \delta^\perp \alpha,
\psi>,\] where $\delta^\perp\colon L^\perp \rightarrow KL^{-1}=KV^*/L^\perp$
denotes the projective derivative of $L^\perp$. 

From now on assume that we have fixed a point $\infty \in \HP^1$.
Away from the ends of the immersion $f$, i.e., the isolated points at
which $f$ goes through $\infty$, the point $\infty$ defines
holomorphic structures on the bundles $L$ and $L^\perp$: denote by
$\psi_1\in H^0(V/L)$ a holomorphic section of $V/L$ obtained by
projecting a non--trivial, constant section of the quaternionic line
bundle $\infty \subset V$.  Away from the ends (which coincide with the
vanishing locus of $\psi_1$), there is a unique flat connection $\nabla$
on $V/L$ satisfying $\nabla \psi_1=0$. This connection defines
holomorphic structures $\nabla''$ on $L^\perp =(V/L)^{-1}$ and
$d^\nabla$ on $KV/L\cong L$.

We show now that these holomorphic structures are the holomorphic
structures occurring in the Weierstrass representation of the
corresponding immersion into the Euclidean space $\H = \HP^1\backslash
\{\infty\}$: let $f\colon M \rightarrow \H\subset \HP^1$ so that the
corresponding quaternionic line subbundle is $L=\psi \H$ with $\psi=
\left(\begin{smallmatrix} f\\1
  \end{smallmatrix}\right)$ and $\infty = \left(\begin{smallmatrix}
    1\\0
\end{smallmatrix}\right)\H$. The section $\psi$ is holomorphic since $\delta\psi = \psi_1
df\in \Gamma(KV/L)$ and holomorphic sections of $KV/L$ are precisely the
sections of the form $\psi\eta$ with $*\eta=N\eta$ and $d\eta=0$. The section
$\alpha\in \Gamma(L^\perp)$ defined by $<\alpha,\psi_1>=1$ is holomorphic as
well and $(\alpha,\psi)= df$ which is the Weierstrass representation of $f$.

\subsection{The spectral curve of a quaternionic holomorphic line
  bundle}~\label{sec:spectral_curve} A conformal invariant attached to
an immersion of a torus into 3--space is its \emph{spectral curve}
\cite{Ta98,GS98,Ta04,Ta06,BPP09,BLPP}.  In
Appendix~\ref{sec:spec-immersion} we define the spectral curve of an
immersed torus based on the notion of spectral curve for quaternionic
holomorphic line bundles of degree~$0$ on the torus which we discuss
here. This spectral curve can be equivalently viewed as the spectral
curve of a periodic 2--dimensional Dirac operator~\eqref{eq:dirac-op},
cf.\ Appendix~\ref{app:linebundles}.

Following \cite{BPP09}, we define the spectral curve $\Sigma$ of a
quaternionic holomorphic line bundle $L$ of degree $0$ over a
torus~$T^2=\C/\Gamma$ as the Riemann surface normalizing the complex
analytic set of possible Floquet multipliers (or monodromies) of
non--trivial holomorphic sections of $L$, i.e., the set of \[h\in
\Hom(\Gamma,\C_*)\cong \C_* \times \C_*\] for which there exists a
non--trivial solution to $D\psi = 0$ defined on the universal covering
$\C$ of $T^2=\C/T^2$ that transforms according to
\[ \psi(x+\gamma) = \psi(x) h_\gamma\] for all $x\in T^2$ and
$\gamma\in \Gamma$. In order to justify the definition of the spectral
curve $\Sigma$ one has to verify that the possible multipliers form a
1--dimensional complex analytic set. In \cite{BPP09} this is proven by
asymptotic analysis of a holomorphic family of elliptic operators. In
addition it is shown that $\Sigma$ has one or two ends (depending on
whether its genus is infinite or finite) and one or two connected
components each containing at least one end. Moreover, for a generic
Floquet multiplier $h$ that admits a non--trivial holomorphic section,
the space of holomorphic sections with monodromy $h$ is complex
1--dimensional.

\subsection{The spectral curve of a conformally immersed torus}\label{sec:spec-immersion}
For an immersed torus in 4--space whose normal bundle is topologically
trivial, the spectral curve can be defined using either the M\"obius
invariant or the Euclidean quaternionic holomorphic line bundles
attached to the immersion. The following theorem shows that both
possibilities lead to the same Riemann surface which is therefore
M\"obius invariant.\footnote{It should be noted that when the spectral
  curve is defined using the Euclidean concept of Weierstrass
  representation as in its original definition~\cite{Ta98} for tori in
  $\R^3$, its M\"obius invariance is far from obvious. It was first
  conjectured by the second author \cite{Ta97} and first proven,
  independently, by Grinevich and Schmidt \cite{GS98} and by Pinkall
  (unpublished).  A non--normalized (more precise) version of the
  spectral curve is considered in \cite{GT} where it is shown that
  M\"obius transformations of tori may result in creation and
  annihilation of multiple points on the corresponding spectral curve.
  The proof here is a variant of Pinkall's unpublished proof.}

\begin{The}\label{the:moebius_inv_of_spec}
  For a conformal immersion $f\colon T^2 \rightarrow \R^4=\H\subset
  \HP^1$ with topologically trivial normal bundle, the spectral curve
  of the M\"obius invariant quaternionic holomorphic line bundle $V/L$
  coincides with the spectral curve of the Euclidean quaternionic
  holomorphic structure on $L$.
\end{The}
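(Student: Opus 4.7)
The plan is to establish, for each $h\in \Hom(\Gamma,\C_*)$, a natural bijection between the space of non-trivial holomorphic sections of $V/L$ with monodromy $h$ in the M\"obius invariant structure, and the space of non-trivial holomorphic sections of $L$ with monodromy $h$ in the Euclidean structure. Since each spectral curve is by definition the normalization of the set of admissible multipliers, coincidence of these sets forces the spectral curves to coincide as Riemann surfaces.

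For the setup, because $f\colon T^2\to\HP^1$ is an immersion of a compact surface, its image is a proper subset of $\HP^1$ and I may pick $\infty\in\HP^1\setminus f(T^2)$. With this choice the canonical section $\psi_1\in H^0(V/L)$ projected from a constant generator of the line $\infty\subset V$ is nowhere vanishing, the flat connection $\nabla$ on $V/L$ characterized by $\nabla\psi_1=0$ is globally defined, and the Weierstrass pair $\alpha, \psi$ with $(\alpha,\psi)=df$ globally realizes the Euclidean quaternionic holomorphic structure on $L$.

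The forward map sends a M\"obius-holomorphic section $\psi^h = \psi_1\chi$ of $V/L$ with monodromy $h$ to the $L$-valued section obtained by applying $\delta^{-1}$ to the $KV/L$-component of $\nabla\psi^h = \psi_1\,d\chi$, where $\delta\colon L\to KV/L$ is the projective derivative. This gives a section of $L$ with monodromy $h$, and I must verify it is holomorphic in the Euclidean quaternionic structure on $L$. This is the bundle-theoretic counterpart of the local computation in the proof of Theorem~\ref{the:main}, where the same construction extracts the pair of complex valued functions $(\Phi_1,\Phi_2)$; here no poles or special behavior at ends appear because $\infty\notin f(T^2)$.

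The inverse map starts from a Euclidean-holomorphic $\alpha^h\in H^0(L)$ with monodromy $h$, applies $\delta$ to obtain a $KV/L$-valued form, and integrates it through $\nabla$ to a section of $V/L$. The additive constant introduced by integration is absorbed into a multiple of the flat section $\psi_1$ (whose monodromy is trivial) by an argument analogous to Lemma~\ref{lem:integration_preserving_monodromy}, so the resulting section has multiplicative monodromy $h$; the exceptional case $h\equiv 1$ is handled separately by matching the finite-dimensional space of ``constant'' sections on both sides. The main obstacle is the compatibility check in the forward direction: tracing how the Hopf field of the M\"obius structure on $V/L$, the type decomposition of $\nabla$, and the Weierstrass pairing interact under the isomorphism $\delta$ so that $\delta^{-1}(\nabla\psi^h)$ genuinely satisfies the Euclidean quaternionic Dirac equation on $L$. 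Once this compatibility is established, the bijection on Floquet multipliers yields equality of the two spectral curves.
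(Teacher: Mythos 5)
Your overall architecture --- comparing the sets of admissible Floquet multipliers on the two sides --- is sound, and your forward map is exactly the one the paper uses. But the proposal has a genuine gap precisely at the step you yourself label ``the main obstacle'': you never verify that $\delta^{-1}(\nabla\psi^h)$ is holomorphic for the Euclidean quaternionic structure on $L$, and this is not a deferrable compatibility check but the entire content linking the two structures. In the paper it is established once and for all in Appendix~\ref{sec:weierstrass}: the flat connection $\nabla$ with $\nabla\psi_1=0$ induces the holomorphic structure $d^\nabla$ on $KV/L\cong L$, and that structure is shown there to coincide with the Euclidean (Weierstrass) holomorphic structure on $L$. Once this identification is in hand, the forward direction is immediate: for a M\"obius--holomorphic $\psi^h$ the derivative $\nabla\psi^h$ already lies in $\Gamma(KV/L)$ (holomorphicity of $\psi^h$ is exactly the vanishing of the $\bar K$--part of $\nabla\psi^h$, so there is no ``$KV/L$--component'' to extract) and is $d^\nabla$--closed by flatness, hence Euclidean--holomorphic with the same monodromy. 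Note also that your inverse direction uses the same identification twice more without saying so: you need $\delta(\alpha^h)$ to be $d^\nabla$--closed in order to integrate it on the universal cover at all, and you need the integrated section to be M\"obius--holomorphic, which is again the type statement $\nabla\psi\in\Gamma(KV/L)$. Until the identification of the Euclidean structure with $d^\nabla$ is actually proven (or explicitly quoted), the argument is circular at these points. A minor further point: since $f$ maps into $\H\subset\HP^1$, the point $\infty$ is already prescribed by the statement and happens to miss the image; choosing a different point off the image would change which Euclidean structure on $L$ you are comparing with.

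Granting that identification, your two--sided route can be completed along the lines you sketch, but it is more laborious than the paper's proof, which establishes only the forward inclusion of multiplier sets and then concludes equality from the general structure of spectral curves in \cite{BPP09} (both are 1--dimensional analytic sets with at most two components, each containing an end with prescribed asymptotics); this dispenses entirely with the inverse map, the period normalization in the spirit of Lemma~\ref{lem:integration_preserving_monodromy}, and the exceptional multiplier. If you do pursue the bijection, the period argument carries over verbatim with values in the parallel line $\psi_1\H$, but your treatment of $h\equiv 1$ by ``matching finite--dimensional spaces of constant sections'' should be replaced by the observation that a single multiplier cannot affect a pure 1--dimensional analytic set and hence not its normalization.
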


\begin{proof}
  Because of the asymptotics of spectral curves, i.e., the fact that
  they have at most two components each of which contains an end
  \cite{BPP09}, it is sufficient to check that the set of possible
  monodromies of holomorphic sections of $V/L$ is contained in the set
  of possible monodromies of holomorphic sections of $L=KV/L$ equipped
  with the Euclidean holomorphic structure $d^\nabla$, where $\nabla$
  is the flat connection on $V/L$ defined by the point $\infty$. But
  this immediately follows from the fact that, if $\psi^h$ is a
  holomorphic section with monodromy $h$ of $V/L$, by flatness of
  $\nabla$ its derivative $\nabla\psi^h$ is a holomorphic section of
  $KV/L$ which obviously has the same monodromy as $\psi^h$.
\end{proof}

Note that in Theorem~\ref{the:moebius_inv_of_spec} we assume that $f$
does not go through the point $\infty\in \HP^1$ defining the Euclidean
geometry, that is, the corresponding immersion into $\R^4=\H$ is
assumed to have no ends.  In contrast to this, in the proof of
Theorem~\ref{the:main} we choose $\infty$ for which $f$ has ends.

\section{Conformally immersed tori with reducible spectral curve and elliptic KP solitons}\label{app:elliptic_KP}

The spectral curve $\Sigma$ of a conformally immersed torus
(Appendix~\ref{sec:spec-immersion}) or, more generally, a quaternionic
holomorphic line bundle on a torus (Appendix~\ref{sec:spectral_curve})
is equipped with a pair of holomorphic maps
\[ h_j\colon \Sigma \rightarrow \C_*\qquad j=1,2 \] that describe the
monodromies of holomorphic sections in the direction of a basis
$\gamma_1$, $\gamma_2$ of the lattice $\Gamma$ defining the underlying
torus $T^2=\C/\Gamma$.  If $\Sigma$ has finite genus, the logarithmic
derivatives \[\eta_j=d\log(h_j)\qquad j=1,2\] are meromorphic 1--forms
on the compactification $\bar\Sigma=\Sigma\cup \{o,\infty\}$ of
$\Sigma$ with second order poles and no residues at the ends $o$,
$\infty$ (see e.g. Lemma~5.1 of \cite{BPP09}).  Moreover, all periods
of $\eta_j$, $j=1,2$ take values in $2\pi i\Z$. The existence of a
pair of meromorphic forms with the given asymptotics and periodicity
is a ``closing condition'' for the underlying Dirac potential. In
integrable surface theory, a closing condition of this type probably
first appeared in \cite{Hi} (see Theorem 8.1 there).

The same kind of closing condition characterizes compact Riemann
surfaces with one puncture that arise as spectral curves of elliptic
KP solitons:

\begin{Pro}
  A compact Riemann surface with one puncture is the spectral curve of
  an elliptic KP soliton if and only if it admits two linearly
  independent meromorphic 1--forms $\eta_1$, $\eta_2$ with single
  second order poles and no residues at the puncture such that all
  periods are in $2\pi i\Z$.
\end{Pro}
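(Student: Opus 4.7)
The plan is to identify the two 1--forms with logarithmic derivatives of the elliptic monodromy multipliers of the KP Baker--Akhiezer function, and to invoke the Krichever correspondence \cite{Kr} between KP spectral data and KP solutions. Both directions then reduce to matching the prescribed local expansion of $\eta_j$ at the puncture with the exponential asymptotics of the Baker--Akhiezer function.

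For necessity, I would start from a spectral curve $\bar\Sigma$ of an elliptic KP soliton. By definition, the associated Baker--Akhiezer function $\psi(x,P)$ is elliptic in the $x$--variable with respect to some lattice $\Lambda=\Z\gamma_1+\Z\gamma_2\subset\C$, so there are meromorphic multipliers $h_j\colon\Sigma\rightarrow\C_*$ with $\psi(x+\gamma_j,P)=h_j(P)\psi(x,P)$. Setting $\eta_j:=d\log h_j$ produces two meromorphic 1--forms whose periods lie in $2\pi i\Z$ (since $h_j$ is single valued on $\Sigma$) and whose local behaviour at $\infty$ is governed by Krichever's ansatz $\psi\sim e^{x/w}(1+O(w))$ in a local parameter $w$ at $\infty$. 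This forces $h_j\sim \exp(\gamma_j/w)$ and hence $\eta_j=-\gamma_j\,dw/w^2+O(1)\,dw$, i.e.\ a single second order pole without residue, with leading coefficient determined by $\gamma_j$. Linear independence of $\eta_1,\eta_2$ then reflects the $\R$--linear independence of the lattice generators $\gamma_1,\gamma_2$.

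For sufficiency, I would run this construction in reverse. Given $\eta_1,\eta_2$ with the stated properties, read off would--be lattice vectors $\gamma_1,\gamma_2\in\C_*$ from the principal parts of $\eta_j$ at $\infty$, set $E:=\C/(\Z\gamma_1+\Z\gamma_2)$, and use the $2\pi i\Z$--period hypothesis to integrate $\eta_j$ to well--defined multipliers $h_j(P):=\exp\int^P\eta_j$ on $\Sigma$. Krichever's construction then produces a Baker--Akhiezer function $\psi(x,P)$ on $\bar\Sigma$ normalised by $\psi\sim e^{x/w}$ at $\infty$; comparing local expansions shows $\psi(x+\gamma_j,P)=h_j(P)\psi(x,P)$ at $\infty$, and analytic continuation promotes this to a global identity. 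The resulting KP solution is elliptic in $x$ with respect to $\Lambda$, identifying $\bar\Sigma$ as the spectral curve of an elliptic KP soliton.

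The main obstacle is recognising that the correspondence just outlined is already built into Krichever's construction \cite{Kr}, and that the content of the proposition is really the translation between the two pieces of data. The precise pole structure enters twice: the \emph{no residue} hypothesis is what makes $h_j$ single valued rather than just multivalued sections, while the fact that the poles are of order exactly \emph{two} makes $\log h_j$ linear in the local parameter $w^{-1}$ and thereby matches the first KP time flow. Once these compatibilities are set up, the remaining ingredients---existence of the Baker--Akhiezer function with the prescribed asymptotics, and the equivalence between ellipticity of the KP solution in $x$ and the quasi--periodicity $\psi(x+\gamma_j,\cdot)=h_j\psi(x,\cdot)$---may simply be quoted from \cite{Kr}.
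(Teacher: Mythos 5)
Your proposal is correct and follows essentially the same route as the paper: both reduce everything to Krichever's correspondence and read the hypotheses on $\eta_1,\eta_2$ as the standard closing condition making the $x$--flow periodic, your multipliers $h_j=\exp\int\eta_j$ being precisely the coboundaries trivializing the cocycles that the paper's sketch refers to. The only repair needed is in the sufficiency step: the global identity $\psi(x+\gamma_j,\cdot)=c\,h_j\,\psi(x,\cdot)$ comes from uniqueness of the Baker--Akhiezer function with prescribed pole divisor and essential singularity (up to the normalizing constant given by the order--zero term of $h_j$ at the puncture), not from ``analytic continuation'' of an asymptotic agreement at $\infty$, which by itself would prove nothing.
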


\begin{proof}
  We only sketch the proof, without going into details of the
  underlying finite gap integration theory: a compact Riemann surface
  with one puncture gives rise, via the Krichever construction, to a
  function in $x$, $y$, $t$ which is a meromorphic solution to the KP
  equation. This solution can be obtained from the Baker--Akhiezer
  function which is characterized as the section of a family of
  holomorphic line bundle on the Riemann surface. Its dynamics in $x$,
  $y$, $t$ is given by linear flows in the Jacobian, realized by
  cocycles which linearly depend on $x$, $y$, $t$ and describe the
  change of holomorphic line bundle.  An elliptic KP soliton as
  described in \cite{Kr} is precisely a finite gap KP solution that is
  double periodic in the $x$--variable.  As in the case with two
  punctures (see e.g.\ p.665 in \cite{Hi}), the existence of a pair of
  1--forms $\eta_1$, $\eta_2$ with the given asymptotics and
  periodicity means that there is a pair of $x$--values whose cocycles
  are coboundaries, i.e., that the dynamics in the $x$--direction is
  periodic.
\end{proof}

If the spectral curve of an immersed torus is reducible, the
closedness condition on the full two--punctured spectral curve implies
the closedness condition on each component. In particular, by the
preceding proposition, each component is an elliptic KP spectral
curve.

\begin{Cor}
  If the spectral curve of a conformally immersed torus is reducible,
  its components are spectral curves of elliptic KP solitons.
\end{Cor}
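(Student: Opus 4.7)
The plan is to reduce the corollary to the preceding proposition by showing that each irreducible component of a reducible spectral curve~$\Sigma$ is a compact Riemann surface with one puncture that carries the required pair of meromorphic 1--forms. First I invoke the structure result recalled in Appendix~\ref{sec:spectral_curve}: any spectral curve has at most two connected components and at most two ends, and each component contains at least one end. When $\Sigma$ is reducible there are thus exactly two components $\Sigma'$ and~$\Sigma''$, each carrying exactly one puncture; in particular each component is a compact Riemann surface with one puncture, exactly the kind of curve characterized by the preceding proposition.

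Next I restrict the two 1--forms $\eta_j = d\log(h_j)$, $j=1,2$ (the logarithmic derivatives of the monodromy maps $h_j\colon \Sigma\to \mathbb{C}_\ast$ discussed at the beginning of Appendix~\ref{app:elliptic_KP}) to one of the components $\Sigma'$. Three properties need to be checked for the restrictions $\eta_j|_{\Sigma'}$: (a)~each has a second order pole with vanishing residue at the unique puncture of $\Sigma'$; (b)~all periods lie in $2\pi i \mathbb{Z}$; (c)~they are linearly independent over $\mathbb{C}$. Property~(a) is inherited directly: the puncture of $\Sigma'$ is one of the two ends of $\Sigma$, and the Laurent expansion of $\eta_j$ there depends only on the germ on $\Sigma'$. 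Property~(b) holds because every closed 1--cycle on $\Sigma'$ is automatically a closed 1--cycle on $\Sigma$, so the periods are a subset of the periods on~$\Sigma$.

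For (c) I appeal to the vacuum asymptotic structure of the spectral curve near each end, as described in Appendix~\ref{sec:spectral_curve} and at the start of Appendix~\ref{app:elliptic_KP}: in a suitable local coordinate at the puncture of $\Sigma'$ the leading parts of $\eta_1$ and $\eta_2$ encode the two lattice basis vectors $\gamma_1$, $\gamma_2$ of $\Gamma$. Since $\gamma_1$ and $\gamma_2$ are $\mathbb{R}$--linearly independent in $\mathbb{C}$, any complex linear relation $c_1\eta_1|_{\Sigma'} + c_2\eta_2|_{\Sigma'}=0$ would force a corresponding relation between the principal parts and hence force $c_1=c_2=0$. Applying the preceding proposition then yields that $\Sigma'$ is the spectral curve of an elliptic KP soliton. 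The anti--holomorphic involution that interchanges the two components of $\Sigma$ delivers the same conclusion for the second component $\Sigma''$.

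The main obstacle is step~(c): one must verify that the leading pole data of $\eta_1$ and $\eta_2$ at the end sitting on $\Sigma'$ really are linearly independent in the sense above, which requires reading off the vacuum asymptotics carefully from the general theory in Appendix~\ref{sec:spectral_curve}. Steps~(a) and~(b) are purely formal transfers from $\Sigma$ to its components, so once (c) is established the corollary follows directly from the proposition.
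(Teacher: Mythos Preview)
Your proposal is correct and follows essentially the same route as the paper: the paper's entire argument is the one sentence preceding the corollary, namely that the closedness condition (existence of the pair $\eta_1,\eta_2$ with the stated pole, residue, and period properties) on the full two--punctured spectral curve restricts to each component, whereupon the preceding proposition applies. You have simply unpacked this into the explicit verifications~(a), (b), (c); in particular your caution about~(c), the linear independence of the restricted forms, is a point the paper leaves implicit in the phrase ``closedness condition,'' and your argument via the vacuum asymptotics at the end is the right way to justify it.
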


It would be interesting to understand whether in the special case of
immersed tori in the conformal 3--sphere, reducible spectral curves
are always (like in the examples discussed in
Sections~\ref{sec:four_ends} and \ref{sec:riemann}) spectral curves of
elliptic KdV solitons, i.e., elliptic KP spectral curves for which the
KP solitons don't depend on the $y$--variable and hence solve the KdV
equation.

\section{Elliptic functions and Lam\'e potentials}\label{app:elliptic}

We collect some facts about elliptic functions and Baker--Akhiezer
functions for Hill's equation with 1--gap Lam\'e potential.

\subsection{Weierstrass's elliptic functions} For the uniformization of a
conformal 2--torus we write $T^2 = \C/ \{2\omega_1,2\omega_3\}$.  The
Weierstrass $\wp$--function is the unique elliptic function on $T^2$ with a
single pole of order two and the asymptotics $\wp(x)=\frac1{x^2} + O(x^2)$;
its other three branch points are the half lattice vectors $\omega_1$,
$\omega_3$ and $\omega_2=\omega_1+\omega_3$ and \[(\wp')^2 =
4(\wp(x)-e_1)(\wp(x)-e_2)(\wp(x)-e_3)\] for $e_j=\wp(\omega_j)$, $j=1,2,3$.
The Weierstrass $\zeta$--function is the unique function satisfying
$\zeta'=-\wp$ with $\zeta(x) = \frac1x + O(x^3)$.  Because $\wp$ is even,
$\zeta$ is odd and its translational periods $\zeta(x+2\omega_j) = \zeta(x) +
2\eta_j$ are given by $\eta_j=\zeta(\omega_j)$, $j=1,2,3$.  In our notation
the Legendre relation reads
\[ \eta_1\omega_3 - \eta_3 \omega_1 = \frac{\pi i}2.\]
The Weierstrass $\sigma$--function is the unique solution to $\sigma' = \zeta
\sigma$ with the asymptotics $\sigma(x) = x+O(x^5)$. Because $\zeta$ is odd,
$\sigma$ is odd. Its monodromy is given by 
\[ \sigma(x+2\omega_j) = -\sigma(x) e^{2\eta_j(x+\omega_j)}, \qquad j=1,2,3.\]
The entire function $\sigma$ satisfies
\[ \sigma(x) = x \prod_{(m,n)\neq 0} \left(1- \frac
  x{\omega_{m,n}}\right)\exp\left(\frac x{\omega_{m,n}} + \frac12 \left(
    \frac x{\omega_{m,n}}\right)^2 \right)\] with $\omega_{m,n} = 2m\omega_1
+ 2n \omega_3$.

\subsection{Baker function for Hill's equation with Lam\'e
  potential}~\label{sec:Krichever-BA} The Baker--Akhiezer function of
Hill's equation
\[ \Phi''(x) -2\wp(x)\, \Phi(x) = E\, \Phi(x) \] with Lam\'e potential $-2\wp$
and spectral parameter $E=\wp(\alpha)$ is given by
\[ \Phi_\alpha(x) = \frac{\sigma(\alpha-x)}{\sigma(\alpha)\sigma(x)}
e^{\zeta(\alpha)x} \] for $0\neq \alpha \in
\C/\{2\omega_1,2\omega_3\}$, see \cite{Kr} or Chapter XXIII, 23$\cdot$\!\!
7 of \cite{WW} (according to which this formula goes back to Hermite
1877 and Halphen 1888).  In particular, the spectral curve of Hill's
operator with potential $-2\wp(x)$ is the elliptic curve on which the
potential is defined with the point $0$ removed; the Weierstrass
$\wp$--function describes the 2:1--map $\alpha\mapsto E= \wp(\alpha)$
from the spectral curve to the spectral parameter plane and the
hyperelliptic involution is $\alpha\mapsto -\alpha$.

The asymptotics of $\Phi_\alpha$ is
\[ \Phi_\alpha(x) = \frac1x -\frac12 \wp(\alpha) x + \frac16 \wp'(\alpha) x^2
+ ...\] and its monodromy is given by
\[ \Phi_\alpha(x+2\omega_j) = \Phi_\alpha(x) e^{2(\zeta(\alpha)\omega_j -
  \alpha\eta_j)}, \qquad j=1,2,3. \] As expected from the general spectral
theory of Hill's equation, the Baker--Akhiezer functions $\Phi_\alpha$
corresponding to the branch points $\alpha=\omega_j$, $j=1,2,3$ of $\wp$ have
$\Z_2=\{\pm 1\}$--monodromy (following in our case directly from the Legendre
relation).

\end{document}